\newcounter{zlist}
\newcounter{blist}
\newcounter{rlist}
\newtheorem{theorem}{Theorem}[section]
\newtheorem{lemma}[theorem]{Lemma}
\newtheorem{thm}[theorem]{}
\newtheorem{proposition}[theorem]{Proposition}
\newtheorem{corollary}[theorem]{Corollary}
\newtheorem{remark}[theorem]{Remark}
\newtheorem{example}[theorem]{Example}
\numberwithin{equation}{section}
\newcommand{\ol}{\overline}
\newcommand{\N}{\mathbb{N}}
\newcommand{\E}{\mathbb{E}}
\newcommand{\D}{{\mathscr {D}}}
\newcommand{\xr}{\xrightarrow}
\newcommand{\ra}{\ensuremath{\xymatrix@1@C=16pt{\ar[r]&}}}
\newcommand{\dra}{\ensuremath{\xymatrix@1@C=25pt{\ar@{|->}[r]&}}}
\newcommand{\mra}{\ensuremath{\xymatrix@1@C=20pt{\ar[r] |<(0.4){\object@{|}}&}}}
\begin{document}

\title[Factorization of monoids]{SOME RESULTS ON FACTORIZATION OF MONOIDS}

\author[Zs. Balogh and T. Mesablishvili]{Zsolt Adam Balogh and Tamar Mesablishvili}

\address{Department of  Mathematical Sciences,	UAEU,  United Arab Emirates}
\email{baloghzsa@gmail.com}

\address{Department of Mathematics, I. Javakhishvili Tbilisi State University, Tbilisi, Georgia}
\email{tamar.mesablishvili392@ens.tsu.edu.ge}


\subjclass[2020]{ 18G50, 20J06, 20M10,  20M50}



\begin{abstract} Factorizations of monoids are studied. Two necessary and sufficient conditions in terms of so-called
descent 1-cocycles for a monoid to be factorized through two submonoids are found.  A full classification of those factorizations of a
monoid whose one factor is a subgroup of the monoid is obtained. The relationship between monoid factorizations and
non-abelian cohomology of monoids is analyzed. Some applications of semi-direct product of monoids are given.
\end{abstract}

\keywords{Monoid, factorization, descent cohomology, monoid action}

\maketitle

\section {INTRODUCTION}
Factorizations of mathematical objects is an important topic in mathematics, whose basic underlying idea is to represent
a mathematical object as a product of two (usually simpler) sub-objects with minimal intersection.
The factorization problem can be divided into two parts. The first part is to find all the factorizations of a given mathematical object through its sub-objects. The second part is to study the properties of a mathematical object that has already been factorized with respect to its two smaller sub-objects.

In this paper we deal with the first part of the factorization problem for monoids. Our methodology is inspired by the paper \cite{BM}.
As a first step, we consider \emph{descent 1-cocycles} for monoids (\cite{Me}) and then give two necessary and sufficient
conditions in terms of descent 1-cocycles for a monoid to be factorized through two submonoids (see, Theorems \ref{fact.mon.}
and \ref{bicross}). Furthermore, we provide a full classification of those factorizations of a monoid whose one factor is a subgroup of the monoid (Theorem \ref{com.mon}). Next, Theorems \ref{bijection}, \ref{spl.com.} and \ref{T-compl.mon.} describe certain  relationships between monoid factorizations, non-abelian cohomology of monoids and semi-direct product
of monoids. Finally, we give examples for calculating how many ways a monoid can be factorized.

\section {MONOID FACTORIZATION AND DESENT COHOMOLOGY}\label{fdmon.}
Given a monoid $M$ with unit $\textsf{1}_M$ and elements $m_1,m_2 \in M$. We sometimes will write
 $m_1 \cdot m_2$ for $m_1m_2$ if it helps to avoid confusion.
To indicate that $A$ is a submonoid of the monoid $M$, we use the notation $A \leqslant M$.
In this case, we write $\imath_A$ for the canonical embedding $A \hookrightarrow M$.
If $X$ and $Y$ are subsets of $M$, then $XY$ is the set $\{xy : x\in X \,\,\text{and}\,\, y \in Y\}.$
Given another monoid $N$. Let $0_{M,N}$ denote the homomorphism $M \to N$ that sends each
element of $M$ to the unit element $\textsf{1}_N$ of $N$. This mapping is called the \emph{zero homomorphism} from
$M$ to $N$. Let us denote by $U(M)$ the set of invertible elements of $M$.
Throughout this paper $\mathbb{N}$ denotes the set of positive natural numbers $\{1,2, \ldots\}$ and let $\N_0=\{0\} \cup \N $.

\begin{thm}\em Let $A$ and $B$ be two submonoids of a monoid $M$. A \emph{monoid factorization} is a monoid $M$ together with its submonoids $A, B$ such that the multiplication map
\[[\imath_{A},\imath_{B}]: A \times B \to  M,\,\, (a,b) \longmapsto ab\]
is bijective, or equivalently, each element $m \in M$ can be written uniquely as a product $m=ab$ with $a\in A, \, b\in B$.
In this case one says that $M$ \emph{is factorized by its submonoids} $A$ and $B$, or that the couple $(A,B)$ is a
\emph{factorization} of $M$. $A$ and $B$ are respectively called the \emph{first} and \emph{second factors}
of the factorization. We shall sometimes use the term ``$(A,B \subseteq M)$ is a monoid factorization'' to mean that
the monoid $M$ is factorized by its submonoids $A$ and $B$. $(A,B)$ is a \emph{proper factorization of} $M$
if $A$, and then also $B$, is a proper submonoid of $M$. The monoid $M$ \emph{admits a factorization}, or is \emph{factorizable},
if $M$ is factorized by its proper submonoids. Otherwise it is called \emph{non-factorizable}.

\begin{example} \label{int.0} The set of positive natural numbers $\mathbb{N}$
under multiplication is a monoid. It may readily be verified that the set $\mathbb{O} = \{2k -1|
k \in \mathbb{N}\}$ of odd numbers is a submonoid of $\mathbb{N}$. Write $\mathcal{C}$ for the
cyclic submonoid \[\{2^i: i \,\, \text{is a non-negative integer}\}\subset \mathbb{N}.\]
Since every positive natural number can be uniquely written as a product of a non-negative integer
power of 2 and an odd integer, it follows  that the map
\[\mathbb{O}\times \mathcal{C} \to \mathbb{N}, \, (k,2^i)\mapsto 2^ik\] is bijective.
This shows that $(\mathbb{O},\mathcal{C})$ is a factorization of the monoid $\mathbb{N}$.
\end{example}

Let $M$ be a monoid. We write $\textsf{FAC}(M)$ for the set of all factorizations of $M$, i.e.
\[ \textsf{FAC}(M)=\{(A,B):A \leq M, B \leq M \,\,\text{and}\,\,  (A,B)\,\,\text{is a factorization of}\,\, M \}.\]
For a fixed submonoid $A$ of $M$, let $\textsf{FAC}(A/M)$ be denote the set of
those  submonoids $B \leq M$ for which the pair $(A,B)$ is a monoid factorization of $M$.
Note that when there already exists a monoid factorization of $M$ whose first factor
is $A$, then $\textsf{FAC}(A/M)$ may  be considered as a pointed set with the second factor
of the factorization as the base point. It is easy to see that
\[
\textsf{FAC}(M)=\bigsqcup_{A \subsetneqq M }\textsf{FAC}(A/M).
\]
\end{thm}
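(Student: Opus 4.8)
The plan is to read the displayed equality as the tautological partition of the set of factorization pairs according to their first factor, and then to verify that the resulting map is a bijection of sets. In other words, I would construct the evident correspondence
$\Phi \colon \textsf{FAC}(M) \to \bigsqcup_{A \subsetneqq M}\textsf{FAC}(A/M)$
sending a factorization $(A,B)$ to the copy of $B$ living in the summand indexed by $A$, and check that it is a bijection whose inverse forgets the redundant label.

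First I would verify that $\Phi$ is well defined, which is immediate from the definition of $\textsf{FAC}(A/M)$: the pair $(A,B)$ belongs to $\textsf{FAC}(M)$ exactly when $B$ is one of the submonoids making $(A,B)$ a factorization, that is, precisely when $B \in \textsf{FAC}(A/M)$, so $\Phi(A,B)$ lands in the $A$-th summand. Injectivity is then automatic, since two pairs mapped to the same point of the coproduct must sit in the same summand (forcing $A=A'$) and have the same representative (forcing $B=B'$), whence $(A,B)=(A',B')$. Surjectivity is equally direct: a point of $\bigsqcup_{A}\textsf{FAC}(A/M)$ is by definition an index $A$ together with a submonoid $B \in \textsf{FAC}(A/M)$, and unwinding that membership produces exactly the factorization $(A,B)\in\textsf{FAC}(M)$ hitting it. Disjointness of the union needs no separate argument, as distinct first factors index distinct summands by construction.

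The one place that needs genuine care — and what I expect to be the only non-formal point — is the range of the index $A$, namely the claim that every first factor occurring is a \emph{proper} submonoid. I would settle this by treating the boundary case directly: if $(A,B)$ is a factorization with $A=M$, then for each $b\in B$ the element $b$ admits the two expressions $b=\textsf{1}_M\cdot b$ and $b=b\cdot\textsf{1}_M$ (both legitimate since $b\in M$ and $\textsf{1}_M\in B$), so uniqueness of the expression of $b$ as a product forces $b=\textsf{1}_M$; hence $B=\{\textsf{1}_M\}$ and $(A,B)$ is the degenerate factorization $(M,\{\textsf{1}_M\})$. Thus the only factorization whose first factor is not proper is this degenerate one, which the right-hand side sets aside by restricting to $A\subsetneqq M$; combined with the bijection $\Phi$ above, this yields the asserted disjoint-union decomposition.
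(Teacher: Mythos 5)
The paper gives no argument for the displayed decomposition beyond ``it is easy to see,'' and the tautological partition you set up --- sending $(A,B)$ to the copy of $B$ in the summand indexed by its first factor --- is surely what is intended; that part of your write-up is fine. The genuine problem is your final paragraph, which is internally inconsistent. You correctly prove that $(M,\{\textsf{1}_M\})$ is a factorization, hence an element of $\textsf{FAC}(M)$ exactly as the paper defines that set, and that it is the unique factorization whose first factor fails to be proper. But you cannot simultaneously claim that $\Phi$ is a bijection defined on all of $\textsf{FAC}(M)$ with values in $\bigsqcup_{A\subsetneqq M}\textsf{FAC}(A/M)$ and that this element is ``set aside'' by the right-hand side: an element of the left-hand side with no summand to land in refutes the claimed equality rather than completing its proof. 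As literally written, $\textsf{FAC}(M)=\bigsqcup_{A\subsetneqq M}\textsf{FAC}(A/M)$ fails precisely at $(M,\{\textsf{1}_M\})$ whenever $M$ is nontrivial, and your last sentence derives the identity from the very observation that contradicts it.

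What your boundary analysis actually shows is that the statement needs repair, and only one repair is consistent: the index must run over all submonoids $A\leqslant M$, not just the proper ones (in which case the boundary discussion becomes unnecessary and the whole thing is the pure relabelling you describe in your first two paragraphs). The alternative of reading $\textsf{FAC}(M)$ as the set of proper factorizations does not work either, because $(\{\textsf{1}_M\},M)$ would then be discarded from the left-hand side while still appearing on the right, in the summand indexed by the proper submonoid $\{\textsf{1}_M\}$, since $M\in\textsf{FAC}(\{\textsf{1}_M\}/M)$. Your proof should say explicitly that the equality holds with the index set $\{A: A\leqslant M\}$, or equivalently after deleting the single pair $(M,\{\textsf{1}_M\})$ from $\textsf{FAC}(M)$, rather than asserting the decomposition as printed.
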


The following proposition gives a useful necessary condition for a submonoid of a given monoid to be the
first or second factor in a factorization of the monoid.

\begin{proposition}\label{first.factor} Let $(A,B)$ be a monoid factorization of $M$ and $a\in A$, $b\in B$ and $m\in M$. If
$am \in A $, then $m \in A$. Symmetrically, if $mb\in B$, then $m\in B$.
\end{proposition}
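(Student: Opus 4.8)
The plan is to lean entirely on the \emph{uniqueness} half of the factorization hypothesis, turning the problem into a comparison of two factorizations of a single element. First I would use bijectivity of $[\imath_A,\imath_B]$ to write the given $m\in M$ as $m=a'b'$ with uniquely determined $a'\in A$ and $b'\in B$. Multiplying on the left by $a$ and using that $A$ is a submonoid gives
\[
am=(aa')\,b',\qquad aa'\in A,\ b'\in B,
\]
so this is a bona fide $A$-times-$B$ factorization of the element $am$.

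The key observation is that the hypothesis $am\in A$ furnishes a second such factorization. Since $B$ is a submonoid it contains the unit, $\textsf{1}_M\in B$, and therefore
\[
am=(am)\cdot \textsf{1}_M,\qquad am\in A,\ \textsf{1}_M\in B
\]
is also an $A$-times-$B$ factorization of $am$. Comparing the two and invoking uniqueness forces $b'=\textsf{1}_M$ (and $aa'=am$). The first of these is all I need: it yields $m=a'b'=a'\cdot\textsf{1}_M=a'\in A$, which is the desired conclusion.

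For the symmetric statement I would run the dual argument on the right. Factor $m=a'b'$ as before, multiply on the right by $b$ to obtain $mb=a'\,(b'b)$ with $b'b\in B$, and compare this with $mb=\textsf{1}_M\cdot(mb)$, which is legitimate because $mb\in B$ by hypothesis and $\textsf{1}_M\in A$. Uniqueness now forces $a'=\textsf{1}_M$, whence $m=a'b'=b'\in B$.

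The argument involves no genuine computation; the only point requiring care — and the single place where one could slip — is bookkeeping about which of the two factors lives in $A$ and which in $B$, together with the remark that $\textsf{1}_M$ belongs to both submonoids, so that the two auxiliary factorizations used in each comparison are actually valid. Once that is observed, everything reduces to the bijectivity of the multiplication map.
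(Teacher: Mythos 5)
Your argument is correct and is essentially the paper's own proof: both write $m=a'b'$, observe that $am=(aa')b'$ and $am=(am)\cdot\textsf{1}_M$ are two $A\times B$ decompositions of $am$, and use uniqueness to conclude $b'=\textsf{1}_M$, with the second claim handled by the symmetric right-hand argument. Your version merely spells out the comparison with the trivial factorization more explicitly than the paper does.
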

\begin{proof} According to the symmetry, it suffices to prove the result only for the first sentence. Suppose that $a \in A$ and  $m \in M $ are such that $am \in A $. Since $(A,B)$ is a monoid factorization
of $M$, there exists a unique pair $(a',b)\in A\times B$ such that $m=a'b$. Then
$am=aa'b$ and since $am, aa'\in A$, it follows from the uniqueness in the monoid factorization condition
that $b=\textsf{1}_M$. Thus $m=a'b=a' \in A$.
\end{proof}

The following example is an application of Proposition \ref{first.factor}.

\begin{example} \label{int.}Let $(\mathbb{Z},+,0)$ be the monoid of integers and $\N_0$
its submonoid of non-negative integers. Then $\emph{\textsf{FAC}}(\N_0/\mathbb{Z})=\emptyset$, since
$\N_0$ does not satisfy the condition of  Proposition \ref{first.factor}. Indeed, $10 \in
\N_0$ and $10+(-5)  \in \N_0$, but $-5 \notin \N_0$.
\end{example}

For a fixed factorization $(A,B)$ of a monoid $M$, we denote by $l^B_A$ and $ r^B_A$ (or, more simply, by $l$ and $r$
if no confusion can arise; we add superscripts and subscripts only if there is more than one monoid factorization in context)
the maps $M\to A$ and  $M\to B$ defined implicitly by
\[[\imath_A,\imath_B]^{-1}(m)=(l^B_A(m),r^B_A(m)), \qquad (m \in M).\]

\begin{proposition} \label{ker} Let $(A, B)$ be a factorization of a monoid $M$. Then
\begin{equation}\label{lr}
\emph{\textsf{Ker}}(l)=B
\quad \text{and}\quad
\emph{\textsf{Ker}}(r)=A.
\end{equation}
\end{proposition}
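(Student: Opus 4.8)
The plan is to argue directly from the defining identity $m = l(m)\,r(m)$, in which $l(m)\in A$ and $r(m)\in B$, together with the uniqueness of this decomposition that is guaranteed by the factorization hypothesis. First I would record that the kernels in question are the preimages of the identity, namely $\textsf{Ker}(l)=\{m\in M : l(m)=\textsf{1}_M\}$ and $\textsf{Ker}(r)=\{m\in M : r(m)=\textsf{1}_M\}$, where I use that $\textsf{1}_A=\textsf{1}_B=\textsf{1}_M$ since $A$ and $B$ are submonoids of $M$.

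To establish $\textsf{Ker}(l)=B$, I would prove the two inclusions separately. For $B\subseteq \textsf{Ker}(l)$: given $b\in B$, the trivial equality $b=\textsf{1}_M\cdot b$ exhibits $b$ as a product of an element of $A$ and an element of $B$, so by uniqueness $l(b)=\textsf{1}_M$ (and $r(b)=b$); in particular $b\in\textsf{Ker}(l)$. For the reverse inclusion, if $m\in\textsf{Ker}(l)$ then $m=l(m)\,r(m)=\textsf{1}_M\cdot r(m)=r(m)\in B$. Combining the two inclusions yields $\textsf{Ker}(l)=B$.

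The identity $\textsf{Ker}(r)=A$ then follows by the symmetric argument, interchanging the roles of $A$ and $B$: given $a\in A$, writing $a=a\cdot\textsf{1}_M$ and invoking uniqueness gives $r(a)=\textsf{1}_M$, so $A\subseteq\textsf{Ker}(r)$; conversely, $m\in\textsf{Ker}(r)$ forces $m=l(m)\cdot\textsf{1}_M=l(m)\in A$. I do not expect any genuine obstacle here, as the whole argument is a direct unwinding of the definitions of $l$ and $r$. The only point requiring a little care is to appeal to the \emph{uniqueness} clause of the factorization condition (just as was done in Proposition~\ref{first.factor}), rather than merely to the existence of the decomposition, when concluding that the corresponding component is $\textsf{1}_M$.
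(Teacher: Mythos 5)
Your argument is correct and coincides with the paper's own proof: both inclusions are obtained exactly as in the paper, writing $b=\textsf{1}_M\cdot b$ and invoking the uniqueness of the decomposition to get $B\subseteq\textsf{Ker}(l)$, and then using $m=l(m)\,r(m)$ to get the reverse inclusion, with the second identity following by symmetry. No gaps.
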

\begin{proof}We only prove the first equation, the second one can be proved similarly.
Let $(A, B)$ be a factorization of a monoid $M$. Then for any $b \in B$, $b=\textsf{1}_M b$ and we have
$l(b)=\textsf{1}_M $ for all $b \in B$. Thus, $B\subseteq \textsf{Ker}(l)$. Conversely, let $m \in M$ be such that
$l(m)=\textsf{1}_M $. Since $m=l(m)r(m)$, it follows that $m=r(m) \in B$. Hence $\textsf{Ker}(l)\subseteq B$ and
the proof is completed.
\end{proof}

\begin{proposition} \label{l} Let $(A, B)$ be a factorization of a monoid $M$. Then the map $$l=l^B_A: M \to A$$
satisfies the following conditions:
\begin{enumerate}
  \item [(L1)] $l(a)=a$ for all $a\in A$;
  \item [(L2)] $l(am)=al(m), \, $  for all $a\in A$ and $m \in M$;
  \item [(L3)] $l(m_1m_2)=l(m_1l(m_2))$, for all $m_1, m_2 \in M$.
\end{enumerate}
\end{proposition}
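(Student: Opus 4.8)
The plan is to derive all three identities from a single principle: by definition, $m = l(m)\,r(m)$ is the \emph{unique} way of writing $m$ as a product of an element of $A$ followed by an element of $B$. So for each identity I will exhibit an explicit decomposition of the relevant element in the form (something in $A$)(something in $B$) and then read off the value of $l$ by invoking this uniqueness. Throughout I use that $A$ and $B$ are submonoids, hence closed under multiplication and each containing $\textsf{1}_M$.

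For (L1), take $a \in A$ and write $a = a \cdot \textsf{1}_M$; since $a \in A$ and $\textsf{1}_M \in B$, uniqueness forces $l(a) = a$ (and incidentally $r(a) = \textsf{1}_M$). For (L2), fix $a \in A$ and $m \in M$, and substitute the canonical decomposition of $m$:
\[
am = a\,l(m)\,r(m) = \bigl(a\,l(m)\bigr)\,r(m).
\]
Here $a\,l(m) \in A$ because $A$ is closed under products, while $r(m) \in B$. Comparing with $am = l(am)\,r(am)$ and using uniqueness yields $l(am) = a\,l(m)$, as required.

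For (L3) I apply the same idea twice. Given $m_1, m_2 \in M$, I first expand $m_2$:
\[
m_1 m_2 = m_1\,l(m_2)\,r(m_2).
\]
Now I decompose the element $m_1\,l(m_2) \in M$ as $l\bigl(m_1 l(m_2)\bigr)\,r\bigl(m_1 l(m_2)\bigr)$ and regroup:
\[
m_1 m_2 = l\bigl(m_1 l(m_2)\bigr)\,\Bigl[r\bigl(m_1 l(m_2)\bigr)\,r(m_2)\Bigr].
\]
The left-hand factor lies in $A$, and the bracketed right-hand factor lies in $B$ since it is a product of two elements of $B$. Uniqueness of the factorization of $m_1 m_2$ then gives $l(m_1 m_2) = l\bigl(m_1 l(m_2)\bigr)$.

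None of the steps presents a genuine obstacle; the only point requiring a little care is (L3), where one must resist trying to relate $l(m_1 m_2)$ to $l(m_1)$ directly, and instead insert the factorization of $m_2$ first and then re-factor the product $m_1 l(m_2)$. The verification is otherwise a mechanical bookkeeping exercise built entirely on the uniqueness clause of the factorization together with the closure of $A$ and $B$ under multiplication.
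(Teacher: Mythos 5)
Your proof is correct, and for (L1) and (L2) it coincides with the paper's argument word for word: both rest on reading off the $A$-component from an explicit decomposition and invoking uniqueness. For (L3) you take a slightly different, and in fact more economical, route. The paper expands \emph{both} factors, writes $r(m_1)\,l(m_2)=ab$ with $a=l(r(m_1)l(m_2))$ and $b=r(r(m_1)l(m_2))$, and then computes $l(m_1m_2)$ and $l(m_1 l(m_2))$ separately, showing each equals $l(m_1)\cdot a$. You instead expand only $m_2$, re-factor the whole product $m_1\,l(m_2)$ in one stroke, and exhibit $m_1m_2 = l(m_1 l(m_2))\cdot\bigl[r(m_1 l(m_2))\,r(m_2)\bigr]$ directly as an $A$-times-$B$ decomposition, so the identity drops out of a single application of uniqueness. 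What the paper's version buys is the extra byproduct $l(m_1m_2)=l(m_1)\cdot l(r(m_1)l(m_2))$, an explicit formula that foreshadows the "matched pair" structure exploited later (e.g.\ in Theorem \ref{semi-d.}); what yours buys is brevity and the observation that (L3) needs nothing beyond the uniqueness clause and closure of $B$ under multiplication. Both are complete and correct.
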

\begin{proof} Since $a=a\textsf{1}_M$ for all $a\in A$, the definition of $l$ shows that (L1) holds.
Next, since $am=al(m)r(m)$ and $al(m)\in A$, while $r(m) \in B$, by the definition of $l$ we have that
$l(am)=al(m)$, so (L2) holds. Finally, let $m_1,m_2$ be arbitrary elements of $M$. Clearly,
\begin{itemize}
  \item[-] $m_1=l(m_1)\cdot r((m_1)$;
  \item[-] $m_2=l(m_2)\cdot r(m_2)$;
  \item[-] $r(m_1)\cdot l(m_2)=ab$, where $a=l(r(m_1)\cdot l(m_2))$ and $b=r(r(m_1)\cdot l(m_2))$.
\end{itemize} As a consequence
\[m_1m_2=l(m_1)\cdot r(m_1)\cdot l(m_2)\cdot r(m_2)=l(m_1)\cdot a \cdot b \cdot r(m_2).\]
Since $l(m_1)\cdot a \in A$ and $b\cdot r(m_2) \in B$, it follows that
\[l(m_1m_2)=l(m_1)\cdot a.\]
On the other hand, we have:
\[m_1\cdot l(m_2)=l(m_1)\cdot r(m_1)\cdot l(m_2)=l(m_1)\cdot a \cdot b,\]
whence -- since $l(m_1)\cdot a \in A$ and $b\in B$ -- one concludes that
\[l(m_1l(m_2))=l(m_1)\cdot a.\] Consequently, $l(m_1m_2)=l(m_1l(m_2))$ and (L3) also holds.
\end{proof}

Symmetrically  we have the following.

\begin{proposition} \label{r} Let $(A, B)$ be a factorization of a monoid $M$. Then the map $$r=r^B_A: M \to B$$
satisfies the following conditions:
\begin{enumerate}
  \item [(R1)] $r(b)=b$ for all $b\in B$;
  \item [(R2)] $r(mb)= r(m)b, \, $  for all $m\in M$ and $b \in B$;
  \item [(R3)] $r(m_1m_2)=r(r(m_1) m_2)$, for all $m_1, m_2 \in M$.
\end{enumerate}
\end{proposition}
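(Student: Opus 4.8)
The plan is to exploit the left--right symmetry of the factorization hypothesis and simply transcribe the argument used for Proposition~\ref{l}, interchanging throughout the roles of $A$ and $B$, of $l$ and $r$, and of left and right multiplication. Concretely, every element $m\in M$ has its unique decomposition $m=l(m)r(m)$ with $l(m)\in A$ and $r(m)\in B$, and each of the three assertions will follow by exhibiting an explicit factorization of the relevant product and invoking the uniqueness built into the definition of $l$ and $r$.

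For (R1), I would write $b=\textsf{1}_M b$ for $b\in B$; since $\textsf{1}_M\in A$ and $b\in B$, uniqueness forces $r(b)=b$. For (R2), given $m\in M$ and $b\in B$, I start from $mb=l(m)\,r(m)\,b$. Here $l(m)\in A$, while $r(m)b\in B$ because $B$ is a submonoid; uniqueness of the factorization then yields $r(mb)=r(m)b$ immediately.

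The only step requiring genuine care is (R3), the mirror of (L3). The key is to factor the ``middle'' product $r(m_1)\,l(m_2)$ (and \emph{not} $r(m_1)r(m_2)$): I would set $a=l(r(m_1)l(m_2))\in A$ and $b=r(r(m_1)l(m_2))\in B$, so that $r(m_1)\,l(m_2)=ab$. Substituting $m_1=l(m_1)r(m_1)$ and $m_2=l(m_2)r(m_2)$ gives $m_1m_2=l(m_1)\,a\,b\,r(m_2)$, and since $l(m_1)a\in A$ while $b\,r(m_2)\in B$, uniqueness identifies $r(m_1m_2)=b\,r(m_2)$. For the other side, $r(m_1)m_2=a\,b\,r(m_2)$ with $a\in A$ and $b\,r(m_2)\in B$, so the same uniqueness gives $r(r(m_1)m_2)=b\,r(m_2)$. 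Comparing the two expressions yields (R3).

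I expect no real obstacle beyond bookkeeping: the whole content is that the factorization is two-sided, so the proof of Proposition~\ref{l} carries over verbatim under the symmetry $A\leftrightarrow B$, $l\leftrightarrow r$. The one point demanding attention is the choice of intermediate decomposition in (R3), since mechanically dualizing ``$l(m_1)$ remains on the left'' to ``$r(m_2)$ remains on the right'' could tempt one to split the wrong product; factoring $r(m_1)l(m_2)$ is precisely what makes both sides collapse to $b\,r(m_2)$.
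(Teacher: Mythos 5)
Your proof is correct and is exactly what the paper intends: the paper gives no separate argument for Proposition~\ref{r}, stating only that it follows ``symmetrically'' from Proposition~\ref{l}, and your writeup is precisely that dual argument, including the same key choice of factoring the middle product $r(m_1)l(m_2)=ab$ used in the paper's proof of (L3) and then reading off the $B$-component instead of the $A$-component.
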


We now use Proposition \ref{l} to give an example showing that the condition in Proposition \ref{first.factor}
is not sufficient for a submonoid of a given monoid to be the first factor in a factorization of the monoid.

\begin{example} \label{int.1} The set of non-negative even integers $\E=2\N_0$
under addition is a monoid and hence a submonoid of $(\N_0,+,0)$. If $n$ is a non-negative
even integer and $m \in \N_0$ is such that $n+m$ is even, then clearly $m$ is also even. Hence
$\mathbb{E}$ satisfies the condition of  Proposition \ref{first.factor}. We claim that $\mathbb{E}$ cannot
appear as the first factor in a monoid factorization of $(\N_0,+,0)$; equivalently, $\emph{\textsf{FAC}}(\mathbb{E}/\N_0)=\emptyset$.
Indeed, if $\,\emph{\textsf{FAC}}(\mathbb{E}/\N_0)\neq\emptyset$, then by Proposition \ref{l}, there
exists a map $l:\N_0 \to \mathbb{E}$ satisfying Conditions (L1) - (L3). Since $\mathbb{E}$ is a commutative
monoid,  $l$ is a homomorphism of monoids (see, \cite[Proposition 3.9]{BM}). This implies in particular that
\[ 2 \stackrel{(L1)}=l(2)=l(1+1)=l(1)+l(1).\]
Keeping in mind that $l$ takes its values from $\mathbb{E}$ it follows that $l(1)=1$. Then  we have for all $n \in \N_0$:
\[l(2n+1)=l(2n)+l(1)\stackrel{(L1)}=2n+l(1)=2n+1,\]
which contradicts the assumption that $l$ maps $\N_0$ to $\mathbb{E}$. This shows that $\emph{\textsf{FAC}}
(\mathbb{E}/\mathbb{Z})=\emptyset$, as desired.
\end{example}

Let $X$ be a submonoid of a monoid $M$. We write $\mathcal{D}_l(M,X)$ (resp. $\mathcal{D}_r(M,X)$)
for the set of those maps $M \to X$ satisfying Conditions (L1) - (L3) (resp. (R1) - (R3)). The elements of
$\mathcal{D}_l(M,X)$ (resp. $\mathcal{D}_r(M,X)$) are called (\emph{left}) (resp. (\emph{right}) \emph{1-descent cocycles},
see \cite{Me} for more details. According to Propositions \ref{l} and \ref{r}, for any factorization $(A,B)$ of $M$, both sets
$\mathcal{D}_l(M,A)$ and $\mathcal{D}_r(M,B)$ are pointed with base points $l^B_A$ and $r^B_A$, respectively.

\begin{proposition}\label{left} Let $A$ be a submonoid of a monoid $M$. Then any $\,a_0 \in U(A)$ induces a bijection
\[a_0 \star -:\mathcal{D}_l(M,A) \to \mathcal{D}_l(M,A)\] defined by \[(a_0 \star q) (m)=q(ma_0)a^{-1}_0.\]
\end{proposition}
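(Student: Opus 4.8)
The plan is to proceed in two stages: first show that $a_0 \star q$ actually lies in $\mathcal{D}_l(M,A)$ whenever $q$ does, and then establish bijectivity by recognizing the assignment $a_0 \mapsto (a_0 \star -)$ as (the restriction to a single element of) a left action of the group $U(A)$ on $\mathcal{D}_l(M,A)$, so that $a_0^{-1}\star-$ serves as the inverse.

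For the first stage, I would write $p := a_0 \star q$, so that $p(m) = q(ma_0)a_0^{-1}$. Since $a_0 \in U(A)$ we have $a_0^{-1} \in A$, and as $q(ma_0) \in A$ the product lies in $A$; hence $p$ is a well-defined map $M \to A$, and it remains to check Conditions (L1)--(L3). For (L1): given $a \in A$ we have $aa_0 \in A$, so (L1) for $q$ gives $q(aa_0) = aa_0$ and thus $p(a) = aa_0 a_0^{-1} = a$. For (L2): applying (L2) for $q$ to the pair $(a, ma_0)$ yields $q(a\cdot ma_0) = a\,q(ma_0)$, whence $p(am) = a\,q(ma_0)a_0^{-1} = a\,p(m)$.

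The only step requiring a small idea is (L3), and this is where I expect the main (essentially the sole) obstacle to lie. The key observation is that $p(m_2)a_0 = q(m_2 a_0)a_0^{-1}a_0 = q(m_2 a_0)$; that is, right multiplication by $a_0$ undoes the twist. Using this, $p(m_1 p(m_2)) = q\bigl(m_1\, p(m_2)\,a_0\bigr)a_0^{-1} = q\bigl(m_1\, q(m_2 a_0)\bigr)a_0^{-1}$, while $p(m_1 m_2) = q(m_1 m_2 a_0)a_0^{-1}$. These two right-hand sides coincide by (L3) for $q$ applied to the pair $(m_1,\, m_2 a_0)$, namely $q\bigl(m_1 (m_2 a_0)\bigr) = q\bigl(m_1\, q(m_2 a_0)\bigr)$. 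Thus the obstruction is resolved simply by feeding $m_2 a_0$, rather than $m_2$, into the cocycle identity for $q$.

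For bijectivity, I would verify the action axioms by a direct computation: $(a_0 \star (a_1 \star q))(m) = (a_1\star q)(ma_0)a_0^{-1} = q(m a_0 a_1)(a_0 a_1)^{-1} = ((a_0 a_1)\star q)(m)$, so $a_0 \star (a_1 \star q) = (a_0 a_1)\star q$, and plainly $(\textsf{1}_M \star q)(m) = q(m)a_0^{-1}\big|_{a_0=\textsf{1}_M} = q(m)$. Consequently $a_0^{-1}\star -$ is a two-sided inverse of $a_0 \star -$, which shows that $a_0 \star -$ is a bijection of $\mathcal{D}_l(M,A)$, completing the argument.
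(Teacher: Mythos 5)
Your proof is correct and follows essentially the same route as the paper: the key step, verifying (L3) by feeding $m_2a_0$ into the cocycle identity for $q$, is exactly the computation in the paper's proof, and your bijectivity argument via the composition law $a_0\star(a_1\star q)=(a_0a_1)\star q$ is what the paper asserts as ``easy to see'' and then records separately as Proposition \ref{left.m.}. If anything, your verification of (L1) for all $a\in A$ (rather than only at $\textsf{1}_M$, as in the paper) is slightly more complete.
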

\begin{proof} Fix an element $a_0 \in U(A)$.  We first show that for an arbitrary element $q\in \mathcal{D}_l(M,A)$,
the map $a_0 \star q$ lies in $\mathcal{D}_l(M,A)$, that is, satisfies the conditions (L1) - (L3).
To begin with, observe that as it follows easily from the definition of $a_0 \star q$, it takes values in $A$. Next,
since
\[(a_0 \star q)(\textsf{1}_M)=q(\textsf{1}_Ma_0)a^{-1}_0=q(a_0)a^{-1}_0\stackrel{(L1)}=a_0a^{-1}_0=\textsf{1}_M,\]
it follows that the map $a \star q$ satisfies Condition (L1). For Condition (L2) we have
\[(a_0 \star q)(am)=q(ama_0)a^{-1}_0 \stackrel{(L2)}= aq(ma_0)a^{-1}_0=a((a_0 \star q)(m)).\]
It now remains to verify Condition (L3).
 \begin{align*}
(a_0 \star q)(m_1m_2) &=&\text{by definition of}\,\,\star \\
&=q(m_1m_2a_0)a^{-1}_0 &\text{by}\,\, (\textsc{L}3)\\
&=q(m_1q(m_2a_0))a^{-1}_0  \\
&=q(m_1q(m_2a_0)a^{-1}_0a_0)a^{-1}_0&\text{by definition of}\,\, \star \\
&= (a_0 \star q)(m_1q(m_2a_0)a^{-1}_0)&\text{by definition of}\,\,\star\\
&=(a_0 \star q)(m_1 (a_0 \star q)(m_2)).
\end{align*}

Finally, it is easy to see that the map $a_0 \star -$ is bijective with inverse
$a^{-1}_0 \star -$.
\end{proof}

Recall that a \emph{left action of a monoid $M$ on a set $X$} is a function
\[M \times X \to X,\,\,(m,x) \mapsto m \star x\] such that
\begin{itemize}
  \item [(A1)] $\textsf{1}\!_{M}\star x=x;$
  \item [(A2)] $(m_1 m_2)\star x=m_1 \star (m_2 \star x)$
\end{itemize} for all $m_1,m_2 \in M$ and all $x \in X.$

\begin{proposition}\label{left.m.} Let $A$ be a submonoid of a monoid $M$.
The association \[(a,q) \longmapsto a \star q \] is a left action of the group $U(A)$ on the set
$\mathcal{D}_l(M,A)$.
\end{proposition}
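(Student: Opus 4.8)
The plan is to verify the two defining axioms (A1) and (A2) of a left action directly from the defining formula $(a_0 \star q)(m)=q(ma_0)a_0^{-1}$. The well-definedness of the operation---that $a \star q$ again lies in $\mathcal{D}_l(M,A)$ for every $a\in U(A)$, and that each $a_0\star-$ is in fact a bijection---has already been established in Proposition \ref{left}, so it remains only to check that the assignment is compatible with the monoid (indeed group) structure of $U(A)$.

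First I would check (A1). Taking $a_0=\textsf{1}_M$, which is the identity element of the group $U(A)$, one computes $(\textsf{1}_M \star q)(m)=q(m\textsf{1}_M)\textsf{1}_M^{-1}=q(m)$ for every $m\in M$, whence $\textsf{1}_M \star q=q$ for all $q\in\mathcal{D}_l(M,A)$. Next I would verify the associativity axiom (A2), namely $(a_1 a_2)\star q=a_1\star(a_2\star q)$ for all $a_1,a_2\in U(A)$ and all $q\in\mathcal{D}_l(M,A)$. Evaluating both sides at an arbitrary $m\in M$, the left-hand side yields $q(m a_1 a_2)(a_1 a_2)^{-1}$ straight from the definition of $\star$, while unwinding the right-hand side in two steps gives $(a_2\star q)(m a_1)\,a_1^{-1}=q(m a_1 a_2)\,a_2^{-1}a_1^{-1}$. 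The two expressions coincide precisely because inversion in the group $U(A)$ is an anti-homomorphism, i.e. $(a_1 a_2)^{-1}=a_2^{-1}a_1^{-1}$.

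There is no genuine obstacle here: the entire argument reduces to these two short computations, and the only point requiring any care is tracking the order of multiplication so that the anti-homomorphism identity $(a_1 a_2)^{-1}=a_2^{-1}a_1^{-1}$ is applied on the correct side. It is worth emphasizing that the restriction to the \emph{invertible} elements $U(A)$, rather than all of $A$, is exactly what makes the formula well defined and the two axioms hold, since the definition of $a_0\star q$ invokes $a_0^{-1}$.
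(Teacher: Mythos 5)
Your argument is correct and follows essentially the same route as the paper: it invokes Proposition \ref{left} for well-definedness and then verifies (A1) and (A2) by the same direct computation, including the identity $(a_1a_2)^{-1}=a_2^{-1}a_1^{-1}$ that the paper uses to conclude $a_1\star(a_2\star q)=(a_1a_2)\star q$.
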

\begin{proof} By Proposition \ref{left}, it suffices to show that $(a,q) \longmapsto a \star q $
satisfies Conditions (A1) and (A2). There is no difficulty in verifying Condition (A1) and
the following calculation verifies Condition (A2):
\begin{align*}
(a_1 \star (a_2 \star q))(m) &=(a_2 \star q) (ma_1)\cdot a_1^{-1}\\
&=q(ma_1a_2)\cdot a_2^{-1}\cdot a_1^{-1}\\
&= q(ma_1a_2)\cdot(a_1a_2)^{-1}\\
&=((a_1a_2)\star q)(m).
\end{align*} Here the first, second and
fourth  equality follow from the definition of $\star$.
\end{proof}

The quotient of $\mathcal{D}_l(M,A)$ by the action
\[U(A)\times \mathcal{D}_l(M,A) \to \mathcal{D}_l(M,A),\,\, q \mapsto a \star q\]
is called the \emph{(left) first descent cohomology
set of $M$ with coefficients in $A$} (see \cite{BM} for more details) and is denoted by $\D_l(M,A)$. Thus,
two elements $q, q'\in \mathcal{D}_l(M,A)$ are \emph{equivalent} under the action of the group $U(A)$
if there exists an element $a \in U(A)$ such that $q(m)a=q'(ma)$ for all $m \in M$. If
there exist submonoids $A, B$ of $M$ such that the pair $(A,B)$ is a monoid factorization of $M$, then
$\D_l(M,A)$ (resp. $\D_r(M,B)$) is a pointed set whose point is the equivalence class of the map $l^B_A$
(resp. $r^B_A$).

We now return to general factorizations of monoids. Quite obviously, if $(A,B)$ is a factorization of a monoid $M$,
then $M=AB$ and $A\cap B= \{\textsf{1}_M\}$. While the converse holds if $M$ is a group and $A$ and $B$
are its subgroups (e.g., \cite{Dummit}), it is not true in general, as the following example shows.

\begin{example} \label{int.2} The set $-\N_0=\{0,-1,-2, \ldots\}$ of non-positive integers
under addition is a monoid and hence a submonoid of $(\mathbb{Z},+,0)$. It is easy to see that
$-\N_0\cap \N_0=\{0\}$ and that the map
\[-\N_0 \times \N_0\to \mathbb{Z},\,\, (m,n)\mapsto m+n\]
is surjective. But this map is not injective. Indeed, for $ -7,-3\in -\N_0$, and
$1,5 \in \N_0$, one has $-7+5=-3+1$.
\end{example}

The following result gives necessary and sufficient conditions for a pair of submonoids of
a given monoid to be a factorization of the monoid.

\begin{theorem}\label{fact.mon.} A monoid $M$ is factorized by its submonoids $A$ and $B$ if and only if
the following conditions hold:
\begin{itemize}
  \item [(i)] $AB=M$;
  \item [(ii)] there exists a map $l : M \to A$ satisfying Condition (L2) such that
$\emph{\textsf{Ker}}(l)=B$;
  \item [(iii)] there exists a map $r \in M \to B$ satisfying Condition (R2) such that
  $\emph{\textsf{Ker}}(r)=A$.
\end{itemize}
\end{theorem}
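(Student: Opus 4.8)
The plan is to prove the two implications separately, with the forward direction being essentially a repackaging of the earlier propositions and the reverse direction carrying the real content.

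For the forward direction, I would assume that $(A,B)$ is a factorization of $M$. Then the multiplication map $[\imath_A,\imath_B]$ is in particular surjective, which immediately gives $AB=M$, so (i) holds. Taking $l=l^B_A$ and $r=r^B_A$, Proposition \ref{l} guarantees that $l$ satisfies (L1)--(L3) (in particular (L2)), and Proposition \ref{r} guarantees that $r$ satisfies (R1)--(R3) (in particular (R2)). Finally, Proposition \ref{ker} yields $\textsf{Ker}(l)=B$ and $\textsf{Ker}(r)=A$, so (ii) and (iii) hold as well.

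For the reverse direction, I would assume (i)--(iii) and show that $[\imath_A,\imath_B]\colon A\times B\to M$ is bijective. Surjectivity is immediate from (i). The heart of the argument is injectivity, and the idea is to exhibit $l$ and $r$ as coordinate projections. First I would record that $\textsf{1}_M\in A\cap B$ together with the kernel conditions forces $l(\textsf{1}_M)=\textsf{1}_M$ and $r(\textsf{1}_M)=\textsf{1}_M$. Combining the former with (L2) gives $l(a)=a\,l(\textsf{1}_M)=a$ for every $a\in A$, while $\textsf{Ker}(l)=B$ gives $l(b)=\textsf{1}_M$ for every $b\in B$; applying (L2) once more then yields $l(ab)=a\,l(b)=a$ for all $a\in A$, $b\in B$. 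Symmetrically, (R2) together with $\textsf{Ker}(r)=A$ produces $r(ab)=r(a)\,b=b$. With these two formulas in hand the uniqueness follows at once: if $ab=a'b'$ with $a,a'\in A$ and $b,b'\in B$, then applying $l$ gives $a=l(ab)=l(a'b')=a'$, and applying $r$ gives $b=r(ab)=r(a'b')=b'$. This establishes injectivity, and combined with surjectivity it completes the proof.

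The point I would flag as the genuine, if modest, obstacle is that monoids need not be cancellative, so one cannot deduce $b=b'$ from $a=a'$ and $ab=a'b'$ alone. This is precisely why the statement asks for both (ii) and (iii): the two retractions $l$ and $r$ recover the two coordinates \emph{independently}, and neither can be dispensed with. The only subtlety in the writing is to make sure that conditions (L1) and (R1) are not assumed but derived on the fly from (L2), (R2) and the kernel hypotheses, since the hypotheses of the theorem deliberately provide only the weaker conditions (L2) and (R2).
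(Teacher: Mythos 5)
Your proposal is correct and follows essentially the same route as the paper: the forward direction cites Propositions \ref{l}, \ref{r} and \ref{ker}, and the reverse direction applies $l$ and $r$ to the equation $ab=a'b'$, using (L2), (R2) and the kernel conditions to recover the two coordinates independently. The small extra steps you include (deriving $l(\textsf{1}_M)=\textsf{1}_M$ and $l(a)=a$ on the way to $l(ab)=a$) are harmless; the paper gets the same identities directly from $a=a\,l(b)\stackrel{(L2)}{=}l(ab)$.
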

\begin{proof} If a monoid $M$ is factorized by its submonoids $A$ and $B$, then clearly
$AB=M$ and hence (i) holds. Next, $l=l^B_A:M\to A$ satisfying Condition (L2) by Proposition \ref{l}
and $\textsf{Ker}(l)=B$ by Proposition \ref{lr}. Thus, (ii) holds. Similarly,
$r=r^B_A:M \to B$ satisfying Condition (R2) by Propositions \ref{r} and $\textsf{Ker}(r)=A$
by Proposition \ref{lr}. Thus, (iii) also holds.

Conversely, suppose that Conditions (i), (ii) and (iii) hold. Condition  (i) guarantees that
the map $[\imath_A,\imath_B]: A \times B \to M$ is surjective. We claim that this
map is injective. Indeed, if $(a,b), (a',b')\in A \times B$ are such that $[\imath_A,\imath_B](a,b)=
[\imath_A,\imath_B] (a',b')$, then $ab=a'b'$. Since $\textsf{Ker}(l)=B$ and $\textsf{Ker}(r)=A$,
it follows that
$$l(b)=l(b')=\textsf{1}_M \,\,\, \text{and} \,\,\, r(a)=r(a')=\textsf{1}_M.$$ We then have
$$a=al(b)\stackrel{(L2)}=l(ab)=l(a'b')\stackrel{(L2)}=a'l(b')=a'$$
and
$$b=r(a)b\stackrel{(R2)}=r(ab)=r(a'b')\stackrel{(R2)}=r(a')b'=b'.$$ Thus, the map
$[\imath_A,\imath_B]$ is injective and so it is bijective.
Consequently, $M$ is factorized through $A$ and $B$.
\end{proof}

The next result gives another necessary and sufficient condition for a monoid to be
factorized by its submonoids. It should be compared with \cite[Theorem 2]{Mes}.
(Given a map $f:X \to Y$ of sets, we write $\textsf{K}[f]$ for the set
$\{(x_1,x_2)\in X \times X : f(x_1)=f(x_2)\}$.)

\begin{theorem}\label{bicross} A monoid $M$ is factorized by its submonoids $A$ and $B$
if and only if there are maps $A  \xleftarrow{l} M \xr{r} B$ of sets such that:
\begin{itemize}
  \item [(i)] $l$ satisfies Condition (L2);
  \item [(ii)] $r$ satisfies Condition (R2);
  \item [(iii)] $r \imath_A =0_{A,B}$ and  $l \imath_B =0_{B,A}$;
  \item [(iv)] $\emph{\textsf{K}}[l]\cap \emph{\textsf{K}}[r]=\emph{\textsf{K}}[\emph{\textsf{Id}}_M]$.
\end{itemize}
\end{theorem}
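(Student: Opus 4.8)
The plan is to prove the two implications separately, drawing on Propositions~\ref{l}, \ref{r} and~\ref{ker} for the forward direction and reserving the substantive argument for the converse. For the forward implication I would assume that $(A,B)$ factorizes $M$ and set $l=l^B_A$ and $r=r^B_A$. Then (i) and (ii) are precisely Propositions~\ref{l} and~\ref{r}; condition (iii) amounts to $r(a)=\textsf{1}_M$ for $a\in A$ and $l(b)=\textsf{1}_M$ for $b\in B$, which is exactly Proposition~\ref{ker}; and (iv) records that the pair $(l,r)$ separates points, immediate from $m=l(m)r(m)$, since then $l(m_1)=l(m_2)$ together with $r(m_1)=r(m_2)$ forces $m_1=m_2$.

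For the converse I would first handle injectivity of $[\imath_A,\imath_B]$ by repeating the computation from the proof of Theorem~\ref{fact.mon.}. If $ab=a'b'$ with $a,a'\in A$ and $b,b'\in B$, then condition (iii) gives $l(b)=l(b')=\textsf{1}_M$ and $r(a)=r(a')=\textsf{1}_M$; applying (L2) yields $a=al(b)=l(ab)=l(a'b')=a'l(b')=a'$, and applying (R2) yields $b=r(a)b=r(ab)=r(a'b')=r(a')b'=b'$. Thus $[\imath_A,\imath_B]$ is injective.

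The crux is surjectivity, and here condition (iv) does the work that the hypothesis $AB=M$ did in Theorem~\ref{fact.mon.}. The key observation is that for every $m\in M$ the element $l(m)r(m)$ --- which visibly lies in $AB$ --- coincides with $m$. To establish this I would compare the two images under $l$ and under $r$. Applying (L2) with $l(m)\in A$ gives $l(l(m)r(m))=l(m)\,l(r(m))$, and since $r(m)\in B$ condition (iii) forces $l(r(m))=\textsf{1}_M$, so $l(l(m)r(m))=l(m)$; symmetrically (R2) together with (iii) gives $r(l(m)r(m))=r(m)$. Hence the pair $(m,\,l(m)r(m))$ lies in $\textsf{K}[l]\cap\textsf{K}[r]$, so by (iv) it lies in $\textsf{K}[\textsf{Id}_M]$, i.e.\ $m=l(m)r(m)\in AB$. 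This proves surjectivity and completes the argument.

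I expect the only real subtlety to be recognising that (iv) is nothing but the injectivity of $(l,r)\colon M\to A\times B$, and then exploiting it through the candidate decomposition $m=l(m)r(m)$; once this is in place, the remaining verifications are routine bookkeeping with (L2), (R2) and (iii).
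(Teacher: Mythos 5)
Your proof is correct and follows essentially the same route as the paper: the forward direction cites Propositions~\ref{l}, \ref{r} and~\ref{ker} and uses $m=l(m)r(m)$ for (iv), and the converse hinges on exactly the paper's key step, namely showing $(m,\,l(m)r(m))\in \textsf{K}[l]\cap\textsf{K}[r]$ via (L2), (R2) and (iii) and then invoking (iv) to get $m=l(m)r(m)$. The only cosmetic difference is that you prove bijectivity of $[\imath_A,\imath_B]$ directly, whereas the paper first recovers $\textsf{Ker}(l)=B$, $\textsf{Ker}(r)=A$ and $AB=M$ and then appeals to Theorem~\ref{fact.mon.}.
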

\begin{proof} "If": If $M$ is factorized through its submonoids $A$ and $B$, then the maps
$l=l^B_A$ and $r=r^B_A$ satisfy Conditions (i) and (ii) by Proposition \ref{fact.mon.}. Moreover, $\textsf{Ker}(l)=B$ and
$\textsf{Ker}(r)=A$ by Proposition \ref{ker}.  This implies that Condition (iii) is also satisfied.

For Condition (iv), suppose that $m, m' \in M$ are such that $(m,m')\in \textsf{K}[l]\cap \textsf{K}[r]$.
Then $l(m)=l(m')$ and $r(m)=r(m')$ and we have:
\[
m=l(m)r(m)=l(m')r(m')=m'.
\]
Therefore, $\textsf{K}[l]\cap \textsf{K}[r]=\textsf{K}[\textsf{Id}_M].$

"Only if": In the light of Proposition \ref{fact.mon.}, we have only to prove that
\begin{itemize}
  \item[-]  $\textsf{Ker}(l)=B$;
  \item[-]  $\textsf{Ker}(r)=A$;
  \item[-]  $AB=M$.
\end{itemize}
In order to prove these equalities, observe first that Condition (iii) guarantees that there
are inclusions  $B\subseteq \textsf{Ker}(l)$ and  $A\subseteq \textsf{Ker}(r)$. Next, since
$r(m)\in B\subseteq \textsf{Ker}(l)$ and $l(m)\in A\subseteq \textsf{Ker}(r)$ for
any $m \in M$, we have
\[ l(l(m)r (m)) \stackrel{(L2)}=l(m)l(r (m))=l(m)\textsf{1}_M= l(m) \]
and
\[r(l(m)r(m)) \stackrel{(R2)}=r(l(m))r(m)=\textsf{1}_Mr(m)= r(m).\]
It follows that the pair $(l(m)r(m), m)$ lies in  $\textsf{K}[l] \cap \textsf{K}[r]$. Then $l(m)r(m)=m$
by (iv). Since $m \in M$ was an arbitrary element, this implies that $AB=M$.

Finally, if $m \in M$ is such that $l(m)=\textsf{1}_M$ (resp. $r(m)=\textsf{1}_M$), then
$m=l(m)r(m)=r(m)$ and hence $m$ lies in $B$
(resp. $m=l(m)r(m)=l(m)\in A)$. Therefore, $ \textsf{Ker}(l) \subseteq B$ (resp. $ \textsf{Ker}(r) \subseteq A$).
Consequently, $\textsf{Ker}(l)=B$ and $\textsf{Ker}(r)=A$. This completes the proof of the theorem.
\end{proof}

In order to proceed, we need the following

\begin{proposition}\label{ker.mon} Let $A$ be a submonoid of a monoid $M$. Then for any
$l \in \mathcal{D}_l(M,A)$, the set $\emph{\textsf{Ker}}(l)$ is a submonoid
of $M$. Moreover, if $M$ is a group, then $\emph{\textsf{Ker}}(l)$ is a subgroup of $M$.
\end{proposition}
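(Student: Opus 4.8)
The plan is to read off the submonoid axioms directly from the defining conditions (L1)--(L3) of a left 1-descent cocycle, and then treat the group case by a short additional argument establishing closure under inverses. No auxiliary constructions should be needed; everything reduces to evaluating $l$ on suitable products and using that $l(m)=\textsf{1}_M$ for $m\in\textsf{Ker}(l)$.

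First I would check that $\textsf{1}_M\in\textsf{Ker}(l)$. Since $A$ is a submonoid of $M$, we have $\textsf{1}_M\in A$, so (L1) gives $l(\textsf{1}_M)=\textsf{1}_M$, placing the unit in the kernel. For closure under multiplication, I would take $m_1,m_2\in\textsf{Ker}(l)$ and compute, using (L3) followed by $l(m_2)=\textsf{1}_M$ and then (L1) or (L3) again, that $l(m_1m_2)=l(m_1\,l(m_2))=l(m_1\textsf{1}_M)=l(m_1)=\textsf{1}_M$, whence $m_1m_2\in\textsf{Ker}(l)$. These two observations together show that $\textsf{Ker}(l)$ is a submonoid of $M$, and this part is essentially immediate from the axioms.

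For the group case, suppose $M$ is a group and fix $m\in\textsf{Ker}(l)$. The decisive point is the choice of relation: rather than applying (L2) with left factor $m^{-1}$ (whose hypothesis would require $m^{-1}\in A$, which need not hold), I would apply (L3) to the product $m^{-1}m=\textsf{1}_M$. This yields $\textsf{1}_M=l(\textsf{1}_M)=l(m^{-1}m)=l\bigl(m^{-1}\,l(m)\bigr)=l(m^{-1}\textsf{1}_M)=l(m^{-1})$, so $m^{-1}\in\textsf{Ker}(l)$. Combined with the submonoid property already established, this proves $\textsf{Ker}(l)$ is a subgroup.

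The only step requiring care is the group case: the natural impulse is to invoke (L2), but its validity is restricted to left factors drawn from $A$, and a general element of $\textsf{Ker}(l)$ need not be invertible within $A$. The correct route is (L3), and one must pair the factors as $m^{-1}m$ (not $mm^{-1}$) so that the term $l(m)$ lands on the right, where (L3) collapses it via $l(m)=\textsf{1}_M$. Once this alignment is recognized, the argument is a routine unwinding of the cocycle conditions, and I anticipate no further obstacle.
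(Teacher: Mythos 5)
Your proof is correct and follows essentially the same route as the paper: (L1) for the unit, (L3) to collapse $l(m_1m_2)$ to $l(m_1l(m_2))=l(m_1)$ for closure under products, and (L3) applied to $m^{-1}m=\textsf{1}_M$ for closure under inverses in the group case. Your added remark on why (L2) is unavailable and why the factors must be paired as $m^{-1}m$ is a sensible clarification but does not change the argument.
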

\begin{proof}
Since $l(\textsf{1}_M)=\textsf{1}_A$ by (L1), it follows that $\textsf{1}_M \in \textsf{Ker}(l)$. Next, if
$m,m' \in \textsf{Ker}(l)$, then $l(m)=l(m')=\textsf{1}_A$ and we calculate:
$$l(mm')\stackrel{(L3)}=l(m l(m'))=l(m \textsf{1}_M)=l(m)=\textsf{1}_M.$$
Hence $mm' \in \textsf{Ker}(l)$, proving that $\textsf{Ker}(l)$ is a submonoid of $M$.

Assuming additionally that $M$ is a group, we have for any $m \in \textsf{Ker}(l)$:
\[
\begin{split}
\textsf{1}_A \stackrel{(L1)}=l(\textsf{1}_M)&= l(m^{-1} m) \stackrel{(L3)}= l(m^{-1}l(m))\\
&=l(m^{-1} \textsf{1}_M)=l(m^{-1}),
\end{split}
\]
whence $m^{-1} \in \textsf{Ker}(l)$.
Thus, $\textsf{Ker}(l)$ is a subgroup of $M$.
\end{proof}

Recall that a \emph{subgroup} of a monoid $M$ is a subgroup of the group $U(M)$.
The next result determines exactly what is needed to guarantee that a subgroup of a monoid to
be the first factor of a factorization of the monoid.

\begin{theorem}\label{subgroup} Let $M$ be a monoid. A subgroup $L$ of $M$ is the first factor of
a monoid factorization of $M$ if and only if $\mathcal{D}_l(M,L)\neq\emptyset.$ If this is the case
and $q \in \mathcal{D}_l(M,L)$, then the pair $(L, \emph{\textsf{Ker}}(q))$ is a factorization of the monoid $M$.
\end{theorem}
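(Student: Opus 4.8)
The plan is to prove the two implications of the equivalence and to extract the ``moreover'' assertion from the harder (converse) direction.

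For the forward implication, suppose $L$ is the first factor of a factorization $(L,B)$ of $M$. Then Proposition~\ref{l} produces the map $l^B_L : M \to L$, which by that proposition satisfies (L1)--(L3) and hence lies in $\mathcal{D}_l(M,L)$; in particular $\mathcal{D}_l(M,L) \neq \emptyset$. This direction uses nothing about the group structure on $L$.

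For the converse, I would fix $q \in \mathcal{D}_l(M,L)$ and set $B := \textsf{Ker}(q)$, which is a submonoid of $M$ by Proposition~\ref{ker.mon}. I would then verify the three hypotheses of Theorem~\ref{fact.mon.} for the pair $(L,B)$, using the distinguished maps $l = q$ and $r$ given by $r(m) = q(m)^{-1}m$. Condition (ii) is immediate: $q$ satisfies (L2) because $q \in \mathcal{D}_l(M,L)$, and $\textsf{Ker}(q) = B$ by definition. The key point, and the place where the subgroup hypothesis is essential, is that every value $q(m)$ is invertible in $M$, so $r(m) = q(m)^{-1}m$ is well defined; the one-line computation $q(r(m)) = q(q(m)^{-1}m) \stackrel{(L2)}{=} q(m)^{-1}q(m) = \textsf{1}_M$ shows $r(m) \in B$, so that $r$ really maps into $B$, and the identity $m = q(m)\cdot r(m)$ gives $LB = M$, i.e.\ (i).

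It then remains to check (iii), that $r$ satisfies (R2) and that $\textsf{Ker}(r) = L$. For the kernel: if $m \in L$ then $q(m) = m$ by (L1), so $r(m) = m^{-1}m = \textsf{1}_M$, while conversely $r(m) = \textsf{1}_M$ forces $m = q(m) \in L$. For (R2), given $m \in M$ and $b \in B$, the relation $q(b) = \textsf{1}_M$ combined with (L3) yields $q(mb) = q(m\,q(b)) = q(m)$, whence $r(mb) = q(mb)^{-1}mb = q(m)^{-1}mb = r(m)b$. With (i), (ii), (iii) verified, Theorem~\ref{fact.mon.} gives that $(L, \textsf{Ker}(q))$ is a factorization of $M$, which proves the converse and simultaneously the final assertion. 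I expect the only genuine obstacle to be guessing the correct map $r(m) = q(m)^{-1}m$; once this is in hand, every verification collapses to a short application of (L1)--(L3), the argument pivoting entirely on the invertibility of the values of $q$, which is exactly the content of $L$ being a subgroup rather than merely a submonoid. A slicker variant that bypasses Theorem~\ref{fact.mon.} is to show directly that $[\imath_L,\imath_B]$ is a bijection: surjectivity is the decomposition $m = q(m)\cdot q(m)^{-1}m$, and injectivity follows because $q(\ell b) \stackrel{(L3)}{=} q(\ell\,q(b)) = q(\ell) \stackrel{(L1)}{=} \ell$ for $\ell \in L$ and $b \in B$, so the $L$-component is forced and cancellation in the group $L$ then forces the $B$-component.
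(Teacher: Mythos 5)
Your proposal is correct and follows essentially the same route as the paper: the same key map $m \mapsto q(m)^{-1}m$, the same identity $q(q(m)^{-1}m)=\textsf{1}_M$ giving $L\,\textsf{Ker}(q)=M$, and the same use of (L1)--(L3) together with invertibility of the values of $q$ for uniqueness; indeed your ``slicker variant'' at the end is exactly the paper's own argument. The only difference is organizational --- your main route packages the verification through Theorem~\ref{fact.mon.}, which costs the extra (but correct) checks that $r$ satisfies (R2) and $\textsf{Ker}(r)=L$, and as a byproduct recovers the content of Remark~\ref{subgroup.r.}.
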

\begin{proof}The condition is necessary by Proposition \ref{l}. If it is satisfied, then there exists
$q \in \mathcal{D}_l(M,L)$. We claim that the pair $(L, \textsf{Ker}(q))$ is a
factorization of the monoid $M$. Indeed, note first that $\textsf{Ker}(q)$ is a submonoid of $M$ by Proposition
\ref{ker.mon}. Next note that for any $m \in M$, $q(m) \in L\subseteq U(M)$ and
\[q(q(m)^{-1}m)\stackrel{(L2)}=q(m)^{-1}q(m)=\textsf{1}_M.\] It follows that
\[q(m)^{-1}m \in \textsf{Ker}(q)\,\,\text{ for all} \,\,m \in M.\]
Now, for every $m \in M$, $m=q(m)(q(m)^{-1}m)$ with $q(m) \in L$ and $q(m)^{-1}m \in \textsf{Ker}(q)$. Thus, $L\textsf{Ker}(q)=M$.
If $l_1k_1=l_2k_2$, where $l_1, l_2 \in L$ and $k_1, k_2 \in \textsf{Ker}(q)$, then $q(k_1)=q(k_2)=\textsf{1}_M$
and we have:
\[l_1=l_1 q(k_1)\stackrel{(L2)}=q(l_1k_1)=q(l_2k_2)\stackrel{(L2)}=l_2q(k_2)=l_2.\]
Since $l_1(=l_2)$ lies in $L$ and it is invertible, the equality $l_1k_1=l_2k_2$ implies
that $k_1=k_2$. Thus, every element of $m \in M$ can be written uniquely as a product $m=lk$ with $l\in L,  k\in \textsf{Ker}(q)$,
proving that $(L, \textsf{Ker}(q))$ is a monoid factorization of $M$.
\end{proof}

\begin{remark}\label{subgroup.r.} It follows from the proof of Theorem \ref{subgroup} that if $L$ is a subgroup of
a monoid $M$ and $q \in \mathcal{D}_l(M,L)$, then the assignment
\[m \longmapsto q(m)^{-1}m\] yields a map $q^\dagger:M \to \emph{\textsf{Ker}}(q)$ which serves as the map $r^{\emph{\textsf{Ker}}(q)}_L$
for the factorization $(L, \emph{\textsf{Ker}}(q))$.
\end{remark}

\begin{theorem}\label{subgroup.t.} Let $M$ be a monoid and $L$ be a subgroup of $M$. The assignment
\[q \mapsto (L,\emph{\textsf{ker}}(q))\]
yields a bijection
\[\mathcal{D}_l(M,L) \simeq \emph{\textsf{FAC}}(L/ M).\] Its inverse takes $(L,B)\in \emph{\textsf{FAC}}(M)$ to $l^B_L$.
\end{theorem}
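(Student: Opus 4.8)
The plan is to exhibit the stated assignment and its proposed inverse as mutually inverse maps, drawing entirely on the results already established. Write $\Phi\colon \mathcal{D}_l(M,L) \to \textsf{FAC}(L/M)$ for the assignment $q \mapsto \textsf{Ker}(q)$ and $\Psi\colon \textsf{FAC}(L/M) \to \mathcal{D}_l(M,L)$ for $B \mapsto l^B_L$. First I would check that both maps are well defined. That $\Phi$ lands in $\textsf{FAC}(L/M)$ is precisely the content of Theorem \ref{subgroup}: since $L$ is a subgroup of $M$ and $q \in \mathcal{D}_l(M,L)$, the pair $(L,\textsf{Ker}(q))$ is a monoid factorization of $M$, so $\textsf{Ker}(q)$ is a legitimate second factor. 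That $\Psi$ lands in $\mathcal{D}_l(M,L)$ is immediate from Proposition \ref{l}, which says the first-factor projection $l^B_L$ of any factorization $(L,B)$ satisfies Conditions (L1)--(L3).

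Next I would verify $\Phi \circ \Psi = \textsf{Id}$. Given $B \in \textsf{FAC}(L/M)$, applying $\Psi$ produces $l^B_L$ and then $\Phi$ produces $\textsf{Ker}(l^B_L)$; but Proposition \ref{ker} gives $\textsf{Ker}(l^B_L) = B$, so this composite returns $B$ unchanged.

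The one step that needs a small argument is $\Psi \circ \Phi = \textsf{Id}$, that is, that for $q \in \mathcal{D}_l(M,L)$ the canonical first-factor projection of the factorization $(L,\textsf{Ker}(q))$ is $q$ itself. Set $B = \textsf{Ker}(q)$. By Remark \ref{subgroup.r.} (equivalently, by the computation in the proof of Theorem \ref{subgroup}), every $m \in M$ decomposes as $m = q(m)\cdot\bigl(q(m)^{-1}m\bigr)$ with $q(m) \in L$ and $q(m)^{-1}m \in B$. This presents $m$ as an element of $L$ times an element of $B$, so the uniqueness built into the factorization $(L,B)$ forces $l^B_L(m) = q(m)$ for every $m$; hence $l^B_L = q$. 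Since both composites are the identity, $\Phi$ and $\Psi$ are mutually inverse bijections, which establishes the claim together with the identification of the inverse as $B \mapsto l^B_L$.

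The principal subtlety is confined to this last step, where one must invoke the invertibility of $q(m)$ in $L$ to recognise $q(m)^{-1}m$ as a genuine element of $B = \textsf{Ker}(q)$ and thereby apply the uniqueness of the factorization; everything else is a direct appeal to the cited results, so I expect no further obstacle.
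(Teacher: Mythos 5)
Your proposal is correct and follows essentially the same route as the paper: well-definedness via Theorem \ref{subgroup} and Proposition \ref{l}, one composite being the identity via Proposition \ref{ker}, and the other via the decomposition $m = q(m)\cdot(q(m)^{-1}m)$ from the proof of Theorem \ref{subgroup}. The only difference is that you spell out the last step, which the paper dispatches with ``it follows easily from the proof of the theorem.''
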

\begin{proof} According to Theorem \ref{subgroup}, the pair $(L,\textsf{ker}(q))$ is a factorization of $M$.
Moreover, it follows easily from the proof of the theorem that $q^{\textsf{ker}(q)}_L=q$. On the other hand,
for any $(L,B)\in \textsf{FAC}(M)$ we have $\textsf{Ker}(l^B_L)=B$ by Proposition \ref{ker}, which completes the proof.
\end{proof}

Now let the monoid $M$ be factorizable by its submonoids $A$ and $B$
and let $\mathcal{D}_l^{u,B}(M,A)$ be the subset of $\mathcal{D}_l(M,A)$
consisting of all the maps under which the image of $B$ lies in $U(A)$, i.e.,
\[\mathcal{D}_l^{u,B}(M,A)=\{q \in \mathcal{D}_l(M,A):q(B)\subseteq U(A)\}.\]
Since $l^B_A(B)=\{\textsf{1}_A\}\subseteq U(A)$ by Proposition \ref{ker}, $\mathcal{D}_l^{u,B}(M,A)$ is actually a pointed subset of
$\mathcal{D}_l(M,A)$. We write $\D^{u,B}_l(M,A)$ for the pointed set of equivalence classes of such a descent 1-cocycles.
Being the quotient of the set $\mathcal{D}_l(M,A)$ by the restriction of the equivalence relation
on $\mathcal{D}_l(M,A)$ of being equivalent descent 1-cocycles, $\D^{u,B}_l(M,A)$
is a pointed subset of the pointed set $\D_l(M,A)$.

\begin{theorem}\label{com.mon}  In the situation described above, the assignment $q \longmapsto \emph{\textsf{Ker}}(q)$
yields an isomorphism  $\mathcal{D}_l^{u,B}(M,A) \simeq \emph{\textsf{FAC}}(A/M)$ of pointed sets.
\end{theorem}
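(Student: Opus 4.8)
The plan is to produce an explicit two-sided inverse for the assignment $\Phi\colon q \mapsto \textsf{Ker}(q)$, namely the map $\Psi$ sending $B' \in \textsf{FAC}(A/M)$ to the descent cocycle $l^{B'}_A$ attached to the factorization $(A,B')$. I would then verify that $\Phi$ and $\Psi$ are mutually inverse, base-point-preserving maps between the two pointed sets.

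First I would check that $\Phi$ is well-defined, that is, that $(A,\textsf{Ker}(q))$ is a factorization of $M$ for each $q \in \mathcal{D}_l^{u,B}(M,A)$; note $\textsf{Ker}(q)$ is a submonoid by Proposition \ref{ker.mon}. Writing $m = ab$ with $a \in A$, $b \in B$ via the given factorization, Condition (L2) gives $q(m) = a\,q(b)$ with $q(b) \in U(A)$, and then $m = \bigl(a\,q(b)\bigr)\bigl(q(b)^{-1}b\bigr)$ displays $m$ as a product of an element of $A$ and the element $q(b)^{-1}b$, which lies in $\textsf{Ker}(q)$ by (L2); hence $A\,\textsf{Ker}(q) = M$. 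For uniqueness of the decomposition, from $a_1k_1 = a_2k_2$ with $k_i \in \textsf{Ker}(q)$ I would apply $q$ together with (L2) to obtain $a_1 = a_2$, and then compare second components in the original factorization $(A,B)$ to force $k_1 = k_2$, each $k_i \in \textsf{Ker}(q)$ being determined by its $B$-component.

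I expect the main obstacle to be the well-definedness of $\Psi$: showing that $q := l^{B'}_A$ actually lands in the restricted set, i.e. $q(b) \in U(A)$ for every $b \in B$---and crucially that $q(b)$ is \emph{two-sided} invertible, since in a mere monoid a single cancellation only produces a one-sided inverse. I would fix $b \in B$, put $b' := r^{B'}_A(b) \in B'$ so that $b = q(b)\,b'$, and write $b' = \alpha\beta$ in the original factorization $(A,B)$. Uniqueness of the $(A,B)$-decomposition of $b = q(b)\,\alpha\,\beta$ then yields simultaneously $q(b)\alpha = \textsf{1}_A$ and $\beta = b$, so that $b' = \alpha b$. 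Since $b' \in B' = \textsf{Ker}(q)$, Condition (L2) gives $\textsf{1}_A = q(b') = q(\alpha b) = \alpha\,q(b)$. The two relations $q(b)\alpha = \textsf{1}_A$ and $\alpha q(b) = \textsf{1}_A$ exhibit $\alpha$ as a two-sided inverse of $q(b)$, so $q(b) \in U(A)$ as needed.

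Finally I would dispatch the remaining identities. The relation $\Phi\circ\Psi = \textsf{Id}$ is immediate from Proposition \ref{ker}, which gives $\textsf{Ker}(l^{B'}_A) = B'$. For $\Psi\circ\Phi = \textsf{Id}$, given $q \in \mathcal{D}_l^{u,B}(M,A)$ with $B' = \textsf{Ker}(q)$, every $m = a'k$ with $a' \in A$, $k \in B'$ satisfies $q(m) = a'\,q(k) = a' = l^{B'}_A(m)$ by (L2), whence $q = l^{B'}_A$. Base-point preservation is then clear: $l^B_A$ is the base point of $\mathcal{D}_l^{u,B}(M,A)$, and $\Phi(l^B_A) = \textsf{Ker}(l^B_A) = B$, the base point of $\textsf{FAC}(A/M)$, once more by Proposition \ref{ker}.
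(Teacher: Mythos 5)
Your proposal is correct and follows essentially the same route as the paper: construct the inverse $B'\mapsto l^{B'}_A$, verify that $(A,\textsf{Ker}(q))$ is a factorization via the decomposition $m=(a\,q(b))(q(b)^{-1}b)$, establish two-sided invertibility of $l^{B'}_A(b)$ for $b\in B$, and check the two composites and base points using Proposition \ref{ker}. The only (harmless) variation is in the invertibility step, where you obtain the second identity $\alpha\,q(b)=\textsf{1}_A$ by applying (L2) to $b'=\alpha b\in\textsf{Ker}(q)$, whereas the paper derives it from the uniqueness of the $(A,B')$-decomposition of $r^{B'}_A(b)$.
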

\begin{proof} Let $q \in \mathcal{D}_l^{u,B}(M,A)$ be an arbitrary element.
Then, by Proposition \ref{ker.mon}, $\textsf{Ker}(q)$ is a submonoid of $M$. Since $M$ is factorized through the
submonoids $A$ and $B$ by the inductive hypothesis, any element $m \in M$ can be written uniquely in the form
$m=l^B_A(m) r^B_A(m)$. In particular, if $k\in \textsf{Ker}(q)$, then
\[\textsf{1}_M=q(k)=q(l^B_A(k) r^B_A(k))\stackrel{(L2)}=l^B_A(k)q(r^B_A(k)).\]
Since $r^B_A(k)\in B$ and hence $q(r^B_A(k)) \in U(A)$ by our assumption on $q$,
it follows that $l^B_A(k)=q(r^B_A(k))^{-1}$.
Thus, any element $k \in \textsf{Ker}(q)$ can be written uniquely in the form
\begin{equation}\label{spl}
  k=q(r^B_A(k))^{-1}  r^B_A(k).
\end{equation} It follows in particular from (\ref{spl}) that if $k,k'\in \textsf{Ker}(q)$ are such that
$r^B_A(k)=r^B_A(k')$, then $k=k'$.

Suppose now $m$ is an arbitrary element of $M$. Since
\[
q[q(r^B_A(m))^{-1}r^B_A(m)]\stackrel{(L2)}=q(r^B_A(m))^{-1} q(r^B_A(m))=\textsf{1}_A,
\]
it follows that $q(r^B_A(m))^{-1} r^B_A(m) \in \textsf{Ker}(q)$.
Evidently,
\[
l^B_A(m)\cdot r^B_A(m)=l^B_A(m) \cdot q(r^B_A(m))\cdot q(r^B_A(m))^{-1}\cdot r^B_A(m)
\]
and $l^B_A(m)\cdot q(r^B_A(m)) \in A$, we can conclude that
\begin{equation}\label{spl.com.mon}
  m=(l^B_A(m) q(r^B_A(m)) (q(r^B_A(m))^{-1} r^B_A(m))
\end{equation}
is an $(A,\textsf{Ker}(q))$-decomposition of $m$.
If $a,a'\in A$ and $k,k'\in \textsf{Ker}(q)$
are such that $ak=a'k'$, then
\[
\begin{split}
a=a\textsf{1}_A&=aq(k)\stackrel{(L2)}=q(ak)\\
&=q(a'k')\stackrel{(L2)}=a'q(k')=a'\textsf{1}_A=a'.
\end{split}
\]
Moreover, we have:
\[
aq(r^B_A(k))^{-1}r^B_A(k)\stackrel{(\ref{spl})}=ak=ak'\stackrel{(\ref{spl})}=aq(r^B_A(k'))^{-1}r^B_A(k').
\]
Since
\begin{itemize}
  \item[-] $aq(r^B_A(k))^{-1}, aq(r^B_A(k'))^{-1}\in A$;
  \item[-] $r^B_A(k), r^B_A(k)\in B$;
  \item[-] $M$ is factorized through $A$ and $B$,
\end{itemize}
it follows that $r^B_A(k)=r^B_A(k')$ and $k=k'$, as we have already remarked.
Thus, each element $m \in M$ is uniquely expressible in the form $m=ak$ with $a \in A$ and $k \in \textsf{Ker}(q)$.
This proves that $M$ is factorized through $A$ and $\textsf{Ker}(q)$, or equivalently,
$\textsf{Ker}(q) \in \textsf{FAC}(A/M)$. Hence the map $q \longmapsto \textsf{Ker}(q)$ is well defined.
Clearly, the map $l^B_A:M \to A$ is the base point of the pointed set
$\mathcal{D}_l^u(M,A)$ and $\textsf{Ker}(l^B_A )= B$ by Proposition \ref{ker}, it follows that
the map $q \longmapsto \textsf{Ker}(q)$ is a morphism of pointed sets. In order to show that this morphism
is an isomorphism, we construct its inverse.

Suppose now that $C \in \textsf{FAC}(A/M)$. Then $M$ is factorized through $A$ and $C$. We claim that the map $q_C=l^C_A:M \to A$
lies in $\mathcal{D}_l^{u,B}(M,A)$. According to Proposition \ref{l}, $q_C$ lies in  $\mathcal{D}_l(M,A)$. Thus we only have to show
that  $q_C(B)\subseteq U(A)$. For this, we consider an arbitrary $b \in B$. Then
\begin{equation}\label{spl.com}
b=l^C_A(b) r^C_A(b).
\end{equation}
Similarly, since $M$ is factorized through $A$ and $B$, and $r^C_A(b)\in C\subseteq M$,
\begin{equation}\label{spl.com.1}
 r^C_A(b)=l^B_A(r^C_A(b)) r^B_A(r^C_A(b)).
\end{equation}
The combination of the last two equalities gives that
$$b=l^C_A(b)l^B_A(r^C_A(b))r^B_A(r^C_A(b)).$$
Quite obviously, $l^C_A(b)l^B_A(r^C_A(b))\in A$ and $r^B_A(r^C_A(b))\in B$. Since  $\textsf{1}_A b=b$ and $(A,B)$ is a factorization of $M$ it follows that
$l^C_A(b)l^B_A(r^C_A(b))=\textsf{1}_A$ and $b=r^B_A(r^C_A(b))$.
Therefore
\[
\begin{split}
r^C_A(b)\stackrel{(\ref{spl.com.1})} =& l^B_A(r^C_A(b)) r^B_A(r^C_A(b))\\
=&\; l^B_A(r^C_A(b))b \stackrel{(\ref{spl.com})}=l^B_A(r^C_A(b)) l^C_A(b) r^C_A(b).
\end{split}
\]
We have $l^B_A(r^C_A(b))l^C_A(b)=\textsf{1}_A$, because $l^B_A(r^C_A(b)) l^C_A(b) \in A, r^C_A(b) \in C$ and
$(A,C)$ is a factorization of $M$.  Thus $q_C(b)=l^C_A(b)$ is invertible with inverse $l^B_A(r^C_A(b))$ for all $b \in B$.
Consequently, $q_C \in\mathcal{D}_l^u(M,A)$.

We claim that the maps $q \mapsto \textsf{Ker}(q)$ and $C \mapsto q_C$ are inverses of each other.
To prove our claim, we have to show that
\begin{itemize}
  \item [(i)]$\textsf{Ker}(q_C)=C$ for all $C \in \textsf{FAC}(A/M)$;
  \item [(ii)] $q\,_{\textsf{Ker}(q)}=q$ for all $q \in \mathcal{D}_l^{u,B}(M,A)$.
\end{itemize}
Since (i) follows at once from Proposition (\ref{lr}), we need only to establish (ii).
Given an arbitrary $q \in \mathcal{D}_l^{u,B}(M,A)$,  we have, for any  $m \in M$,
\[
\begin{split}
m=l^B_A(m) r^B_A(m)&=l^B_A(m) q(r^B_A(m))q(r^B_A(m))^{-1} r^B_A(m)\\
&\stackrel{(L2)}=q(l^B_A(m) r^B_A(m)) q(r^B_A(m))^{-1}  r^B_A(m)\\
&=q(m)q(r^B_A(m))^{-1} r^B_A(m)
\end{split}
\]
with $q(m) \in A$ and $q(r^B_A(m))^{-1} r^B_A(m)\in \textsf{Ker}(q)$. It proves that $l^C_A(m)=q(m)$ for all $m\in M$ and
$q\,_{\textsf{Ker}(q_C)}=q$. Thus, (ii) also holds which completes the proof.
\end{proof}

Let $X$ and $Y$ be subsets of a monoid $M$ and let $A$ be a submonoid of $M$. The subsets $X$ and $Y$ are called $A$-\emph{conjugate} if $X=aYa^{-1}$ for some $a \in U(A)$. When this is the case, one says that $a$ \emph{conjugates} $X$ to $Y$.

\begin{lemma}\label{mcj} Let a monoid $M$ be factorizable by its submonoids $A$ and $B$.
Then for any $A$-conjugate $B'$ of $B$, $M$ is factorized by the submonoids $A$ and $B'$.
Said otherwise, the group $U(A)$ acts by conjugation on the set $\,\,\emph{\textsf{FAC}}(A/M)$.
\end{lemma}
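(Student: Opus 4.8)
The plan is to realize $B'$ as the image of $B$ under an automorphism of $M$ that fixes the first factor $A$ setwise, and then to transport the given factorization $(A,B)$ along that automorphism. Fix $a \in U(A)$ and let $c_a \colon M \to M$ be conjugation, $c_a(x) = a x a^{-1}$. Since a unit of $A$ is a unit of $M$ (they share the identity $\textsf{1}_M$), we have $U(A) \subseteq U(M)$, so $c_a$ is a monoid automorphism of $M$ with inverse $c_{a^{-1}}$. The first thing I would record is the equality $c_a(A) = A$: because both $a$ and $a^{-1}$ lie in $A$, closure gives $aAa^{-1} \subseteq A$ and, symmetrically, $a^{-1}Aa \subseteq A$, whence $c_a(A) = A$. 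By definition $c_a(B) = aBa^{-1} = B'$, so $B'$ is the image of a submonoid under an automorphism of $M$ and is therefore itself a submonoid of $M$.

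The core step is a transport principle: if $\phi$ is any automorphism of a monoid $M$ and $(X,Y)$ is a factorization of $M$, then so is $(\phi(X),\phi(Y))$. Surjectivity of the multiplication map is immediate, since $\phi(X)\phi(Y) = \phi(XY) = \phi(M) = M$. For uniqueness, suppose $x_1'y_1' = x_2'y_2'$ with $x_i' \in \phi(X)$ and $y_i' \in \phi(Y)$; applying $\phi^{-1}$ gives $\phi^{-1}(x_1')\,\phi^{-1}(y_1') = \phi^{-1}(x_2')\,\phi^{-1}(y_2')$ with the preimages lying in $X$ and $Y$, respectively. Uniqueness in the factorization $(X,Y)$ then forces $\phi^{-1}(x_1') = \phi^{-1}(x_2')$ and $\phi^{-1}(y_1') = \phi^{-1}(y_2')$, and applying $\phi$ recovers $x_1' = x_2'$ and $y_1' = y_2'$. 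Taking $\phi = c_a$, $X = A$, $Y = B$ and using $c_a(A)=A$ together with $c_a(B)=B'$, we conclude that $(A,B')$ is a factorization of $M$; equivalently $B' \in \textsf{FAC}(A/M)$.

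For the reformulation I note that the argument above used only that $(A,B)$ is a factorization, so it applies verbatim with any $C \in \textsf{FAC}(A/M)$ in place of $B$: for every $a \in U(A)$ and every $C \in \textsf{FAC}(A/M)$ we have $aCa^{-1} \in \textsf{FAC}(A/M)$, which makes the assignment $(a,C) \mapsto aCa^{-1}$ well defined. The two action axioms are then the familiar conjugation identities $\textsf{1}_A\,C\,\textsf{1}_A^{-1} = C$ and $a_1(a_2Ca_2^{-1})a_1^{-1} = (a_1a_2)C(a_1a_2)^{-1}$, so $U(A)$ acts on $\textsf{FAC}(A/M)$ by conjugation.

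The only step carrying real content is the equality $c_a(A) = A$, rather than a mere inclusion; it is precisely what guarantees that conjugation leaves the first factor unchanged, and it relies on $a$ being a \emph{unit} of $A$, so that $a^{-1} \in A$. Everything else is routine transport of structure along the isomorphism $c_a$. As an alternative, one could instead compute $\textsf{Ker}(a \star l^B_A) = aBa^{-1}$ for the $U(A)$-action of Proposition \ref{left} and then invoke the kernel description of factorizations (Theorem \ref{com.mon}); but that route additionally requires checking that $a \star l^B_A$ sends $B$ into $U(A)$, so I expect the direct automorphism argument to be cleaner.
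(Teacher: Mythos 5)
Your proof is correct, and it reaches the conclusion by the same underlying mechanism as the paper --- conjugation by the unit $a_0 \in U(A)$ --- but packages it differently. The paper argues element by element: for existence it factors $ma_0 = a_1 b$ with $a_1 \in A$, $b \in B$ and rewrites $m = (a_1 a_0^{-1})(a_0 b a_0^{-1})$; for uniqueness it cancels $a_0^{-1}$ on the right, uses uniqueness of the $(A,B)$-decomposition, and then cancels the invertible $a_0$. You instead isolate a general transport principle --- any automorphism of $M$ carries a factorization to a factorization --- and apply it to the inner automorphism $c_{a_0}$, after checking the two facts that make it bite here: $c_{a_0}(A) = A$ (which genuinely needs $a_0^{-1} \in A$, i.e.\ that $a_0$ is a unit \emph{of $A$}) and $c_{a_0}(B) = B'$. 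The resulting decomposition of $m$ is $(a_0 x a_0^{-1})(a_0 y a_0^{-1})$ with $a_0^{-1} m a_0 = xy$, slightly different from the paper's but equally valid. Your route is a bit more conceptual and yields a reusable lemma (and makes transparent that existence and uniqueness transport together along an isomorphism); the paper's is marginally more self-contained, avoiding the automorphism language. Your explicit verification of the action axioms for $(a,C) \mapsto aCa^{-1}$ covers the second sentence of the lemma, which the paper leaves implicit. No gaps.
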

\begin{proof}Suppose that $a_0 \in U(A)$ conjugates $B$ to $B'$, that is, $B'=a_0Ba_0^{-1}$.
Since $M$ is factorized through  its  submonoids $A$ and $B$,
there exist elements $b \in B$ and $a_1\in A$ such that $ma_0=a_1b$ for any $m \in M$ and we have:
$$m=ma_0a_0^{-1}=a_1ba_0^{-1}=(a_1a_0^{-1}) (a_0ba_0^{-1}).$$ Quite obviously, $a_1a_0^{-1} \in A$
and $a_0ax_0^{-1}\in B'$, it follows that any element of $M$ can be  written in the form
$a a_0ba_0^{-1}$ with $a \in A$ and $b\in B$. If
\[
a a_0ba_0^{-1}=a' x_0b'a_0^{-1}, \qquad (a,a'\in A,\quad  b,b'\in B)
\]
then $a a_0b=a' a_0b'$, implying -- again since  $M$ is factorized through $A$ and $B$ -- that $b=b'$ and $a a_0=a' a_0$.
Since $a_0$ is invertible, the last equality implies that $a=a'$. Hence each element $m \in M$ is uniquely expressible in the
form $m=ab'$ with $a \in A$ and $b' \in B'$. Consequently, $M$ is factorized through  its  submonoids $A$ and $B'$.
\end{proof}

By essentially the same proof as for  \cite[Proposition 4.6]{BM} we obtain the following.

\begin{proposition}\label{conj.1}
Two elements of $\mathcal{D}_l^{u,B}(M,A)$ are equivalent if and only if the corresponding elements
of $\emph{\textsf{FAC}}(A/M)$ are conjugate in $M$.
\end{proposition}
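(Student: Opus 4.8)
The plan is to transport the equivalence of cocycles through the bijection $q \mapsto \textsf{Ker}(q)$ of Theorem \ref{com.mon}. Under this bijection the two cocycles $q, q' \in \mathcal{D}_l^{u,B}(M,A)$ correspond to the second factors $C = \textsf{Ker}(q)$ and $C' = \textsf{Ker}(q')$ of the respective factorizations $(A,C)$ and $(A,C')$ of $M$, and ``conjugate in $M$'' is taken, as in Lemma \ref{mcj}, to mean conjugate by an element of $U(A)$. Everything then reduces to a single computation describing how the $U(A)$-action $a_0 \star -$ of Proposition \ref{left} transforms kernels, namely the identity
\[\textsf{Ker}(a_0 \star q) = a_0\,\textsf{Ker}(q)\,a_0^{-1}, \qquad (a_0 \in U(A),\ q \in \mathcal{D}_l(M,A)).\]

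To establish this identity I would unwind the definitions: $m \in \textsf{Ker}(a_0 \star q)$ means $(a_0 \star q)(m) = q(ma_0)a_0^{-1} = \textsf{1}_M$, that is $q(ma_0) = a_0$. Since $a_0^{-1} \in U(A) \subseteq A$, Condition (L2) gives $q(a_0^{-1}\,m\,a_0) = a_0^{-1}q(ma_0)$, so the previous equality is equivalent to $q(a_0^{-1}ma_0) = \textsf{1}_M$, i.e.\ to $a_0^{-1}ma_0 \in \textsf{Ker}(q)$, i.e.\ to $m \in a_0\,\textsf{Ker}(q)\,a_0^{-1}$. This is precisely the asserted identity.

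For the forward implication, if $q$ and $q'$ are equivalent then $q' = a_0 \star q$ for some $a_0 \in U(A)$, and the identity above immediately yields $\textsf{Ker}(q') = a_0\,\textsf{Ker}(q)\,a_0^{-1}$, so the corresponding second factors are conjugate. For the converse, suppose $\textsf{Ker}(q') = a_0\,\textsf{Ker}(q)\,a_0^{-1}$ with $a_0 \in U(A)$. By Proposition \ref{left} the map $a_0 \star q$ again lies in $\mathcal{D}_l(M,A)$, and by the identity it has kernel $a_0\,\textsf{Ker}(q)\,a_0^{-1} = \textsf{Ker}(q')$; moreover this kernel lies in $\textsf{FAC}(A/M)$ by Lemma \ref{mcj}. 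I would then use the elementary fact that any $f \in \mathcal{D}_l(M,A)$ whose kernel is a second factor of a factorization of $M$ satisfies $f = l^{\textsf{Ker}(f)}_A$: writing $m = l^{\textsf{Ker}(f)}_A(m)\,r^{\textsf{Ker}(f)}_A(m)$ and applying (L2) gives $f(m) = l^{\textsf{Ker}(f)}_A(m)\,f(r^{\textsf{Ker}(f)}_A(m)) = l^{\textsf{Ker}(f)}_A(m)$, since the second argument lies in $\textsf{Ker}(f)$. Applying this to both $a_0 \star q$ and $q'$, which share the kernel $\textsf{Ker}(q')$, gives $a_0 \star q = l^{\textsf{Ker}(q')}_A = q'$, so $q$ and $q'$ are equivalent.

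The step I expect to be the main obstacle is the converse, and in particular the temptation to quote injectivity of $q \mapsto \textsf{Ker}(q)$ directly from Theorem \ref{com.mon}: that injectivity is asserted only on the subset $\mathcal{D}_l^{u,B}(M,A)$, and it is not transparent a priori that $a_0 \star q$ lands in $\mathcal{D}_l^{u,B}(M,A)$ rather than merely in $\mathcal{D}_l(M,A)$ (the image of $B$ under $a_0 \star q$ need not obviously consist of invertible elements). The auxiliary identity $f = l^{\textsf{Ker}(f)}_A$ sidesteps this gap, since it requires only $f \in \mathcal{D}_l(M,A)$ together with $\textsf{Ker}(f) \in \textsf{FAC}(A/M)$, both of which are guaranteed here by Proposition \ref{left} and Lemma \ref{mcj}.
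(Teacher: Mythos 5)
Your proof is correct. Note that the paper itself gives no argument here --- it only remarks that the statement follows ``by essentially the same proof as for \cite[Proposition 4.6]{BM}'' --- so your write-up supplies a self-contained proof rather than mirroring one in the text. The whole thing correctly reduces to the identity $\textsf{Ker}(a_0 \star q) = a_0\,\textsf{Ker}(q)\,a_0^{-1}$, whose verification via (L2) is right, and your two points of care are both well placed: reading ``conjugate in $M$'' as $U(A)$-conjugacy is the interpretation forced by Lemma \ref{mcj} and by the use of this proposition in Theorem \ref{conj.2} (the paper's own definition of conjugacy of subsets is always relative to a distinguished submonoid, and the $U(A)$-action is the only one in play); and in the converse direction you are right that one cannot simply invoke injectivity of $q \mapsto \textsf{Ker}(q)$ from Theorem \ref{com.mon}, since at that stage it is not yet known that $a_0 \star q$ lies in $\mathcal{D}_l^{u,B}(M,A)$ (that is only established later, in Proposition \ref{left.1}). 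Your auxiliary observation that any $f \in \mathcal{D}_l(M,A)$ with $\textsf{Ker}(f) \in \textsf{FAC}(A/M)$ must equal $l^{\textsf{Ker}(f)}_A$ cleanly closes that gap using only (L2).
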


\begin{proposition}\label{left.1} For any $a\in U(A)$, the (left) action
\[U(A)\times \mathcal{D}_l(M,A) \to \mathcal{D}_l(M,A),\quad q \mapsto a \star q\]
restricts to a (left) action
\[U(A)\times \mathcal{D}^{u,B}_l(M,A) \to \mathcal{D}^{u,B}_l(M,A).\]
\end{proposition}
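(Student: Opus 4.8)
The plan is to prove the statement by showing that the subset $\mathcal{D}_l^{u,B}(M,A)\subseteq\mathcal{D}_l(M,A)$ is invariant under the $U(A)$-action of Proposition \ref{left.m.}. Once such invariance is known, the axioms (A1) and (A2) for the restricted map are inherited verbatim from the ambient action, since the restricted map is simply the restriction of $a_0\star-$; hence nothing further needs to be checked. Thus the entire content reduces to the following closure property: for every $a_0\in U(A)$ and every $q\in\mathcal{D}_l^{u,B}(M,A)$, the cocycle $a_0\star q$ again carries $B$ into $U(A)$. Using the defining formula $(a_0\star q)(b)=q(ba_0)a_0^{-1}$ together with $a_0^{-1}\in U(A)$, this in turn reduces to establishing that $q(ba_0)\in U(A)$ for all $b\in B$.

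The crux is therefore to understand what happens when $a_0$ slides past $b$. First I would write the unique $(A,B)$-factorization $ba_0=a'b'$ with $a'=l^B_A(ba_0)\in A$ and $b'=r^B_A(ba_0)\in B$. By (L2) we then obtain $q(ba_0)=q(a'b')=a'\,q(b')$, and since $b'\in B$ the hypothesis $q\in\mathcal{D}_l^{u,B}(M,A)$ gives $q(b')\in U(A)$. Consequently $q(ba_0)\in U(A)$ will follow as soon as I can prove that $a'\in U(A)$, which is the key step.

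To see that $a'\in U(A)$, the idea is to extract a two-sided inverse from the uniqueness of factorizations. Factorizing $b'a_0^{-1}=a''b''$ with $a''\in A$ and $b''\in B$, the identity $b=a'b'a_0^{-1}=a'a''b''$ together with the unique factorization $b=\textsf{1}_A b$ forces $a'a''=\textsf{1}_A$ and $b''=b$. Feeding $b''=b$ back in gives $b'=a''ba_0=a''a'b'$, and comparing with the unique factorization $b'=\textsf{1}_A b'$ forces $a''a'=\textsf{1}_A$. Thus $a''$ is a two-sided inverse of $a'$ in $A$, whence $a'\in U(A)$, completing the reduction. I expect this double application of uniqueness---producing both a right and a left inverse of $a'=l^B_A(ba_0)$---to be the main obstacle, since invertibility in a monoid is genuinely two-sided and a single factorization yields only one of the two relations. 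Finally, running the same argument with $a_0^{-1}$ in place of $a_0$ shows that $a_0^{-1}\star-$ also preserves $\mathcal{D}_l^{u,B}(M,A)$, so that $a_0\star-$ and $a_0^{-1}\star-$ are mutually inverse bijections of the subset, confirming that the action of the group $U(A)$ indeed restricts.
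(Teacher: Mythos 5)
Your proof is correct, and it takes a genuinely different route from the paper's. Both arguments make the same initial reduction (to showing $q(ba_0)\in U(A)$) and both ultimately rest on the same trick of applying the uniqueness of factorizations twice to manufacture a two-sided inverse, but they apply it to different decompositions. The paper first invokes Theorem \ref{com.mon} to obtain the factorization $(A,\textsf{Ker}(q))$, writes $b=a'k$ with $k\in\textsf{Ker}(q)$, and then plays the uniqueness game with $ka_0=a_1k_1$ and $k_1a_0^{-1}=a_2k_2$ to show $a_1\in U(A)$, concluding via $q(ba_0)=a'a_1$. You instead work entirely inside the given $(A,B)$-factorization: you decompose $ba_0=a'b'$, reduce to $a'=l^B_A(ba_0)\in U(A)$ via $q(ba_0)=a'q(b')$, and extract the two-sided inverse of $a'$ from the factorizations of $b$ and of $b'$. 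What your approach buys is independence from Theorem \ref{com.mon} (a substantially heavier result), so the proposition becomes self-contained given only Propositions \ref{l} and \ref{ker}; what the paper's approach buys is that the element $k\in\textsf{Ker}(q)$ it manipulates is intrinsic to $q$, which fits the surrounding narrative about $\textsf{Ker}(q)$ as the second factor. Your closing remark that invariance under each $a_0\in U(A)$ (hence under $a_0^{-1}$) suffices for the action to restrict is also correct, and matches the logic the paper leaves implicit.
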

\begin{proof}We need only establish that for any $a\in U(A)$
and $q\in \mathcal{D}^{u,B}_l(M,A)$, $a\star q $ also lies in $\mathcal{D}^{u,B}_l(M,A)$, or equivalently,
$(a \star q)(B)\subseteq U(A)$. In order to establish the subset inclusion, we consider an arbitrary element
$b \in B$. Since
\[(a \star q)(b)=q(ba)a^{-1},\]
the element $(a \star q)(b)$ lies in $U(A)$ if and only if $q(ba)\in U(A)$.
Since $q\in \mathcal{D}^{u,B}_l(M,A)$, it follows from Theorem \ref{com.mon} that $(A, \textsf{Ker}(q))$
is a monoid factorization of $M$ and there exist elements $a'\in A$ and $k \in \textsf{Ker}(q)$
such that $b=a'k$.
Since $q$ satisfies (L2) and $k \in \textsf{Ker}(q)$ we have
\[q(b)=q(a'k)=a'q(k)=a'.\]
As $q \in \mathcal{D}^{u,B}_l(M,A)$, it follows that $a' \in U(A)$. Again, since $(A, \textsf{Ker}(q))
\in \textsf{FAC}(M)$, there exist elements $a_1,a_2 \in A$ and $k_1,k_2 \in \textsf{Ker}(q)$ such that
\begin{equation}\label{left.a}
  ka=a_1k_1
\end{equation}
and
\begin{equation}\label{left.b}
  k_1a^{-1}=a_2k_2.
\end{equation}
Then \quad $k\stackrel{(\ref{left.a})}=a_1k_1a^{-1}\stackrel{(\ref{left.b})}=a_1a_2k_2$\quad and
 \begin{equation}\label{left.c}
  k=k_2 \text{\quad and \quad} a_1a_2=\textsf{1}_M
\end{equation} by the uniqueness condition in the definition of monoid factorization. Next, since
\[k_1=k_1a^{-1}a\stackrel{(\ref{left.b})}=a_2k_2a \stackrel{(\ref{left.c})} =a_2ka\stackrel{(\ref{left.a})}=a_2a_1k_1,\]
$a_2a_1=\textsf{1}_M$ again  by the uniqueness property of monoid factorizations. Thus, $a_1,a_2 \in U(A)$.
Taking into account the equalities
\[q(ba)=q(a'ka)\stackrel{(\ref{left.a})}=q(a'a_1k_1)\stackrel{(L2)}=a'a_1q(k)=a'a_1\]
and that $a' \in U(A)$ we have $q(ba)\in U(A)$, as desired.
\end{proof}

Recall that to any left action of a group $G$ on a set $X$ there is naturally associated an \emph{action groupoid}
$X /\!/  G$ with $X$ as the set of objects. A morphism from $x\in X$ to $x'\in X$ is an elements $g \in G$ with
$gx = x'$.

According to Proposition \ref{left.1} and Lemma \ref{mcj} the group $U(A)$ acts from the left on both $\mathcal{D}^{u,B}_l(M,A)$
and  $\textsf{FAC}(A/M)$. Hence we have two groupoids  $\mathcal{D}^{u,B}_l(M,A)/\!/ U(A)$ and $\textsf{FAC}(A/M) /\!/ U(A)$.
The combination of Theorem \ref{com.mon} and Proposition \ref{conj.1} gives:

\begin{theorem}\label{conj.2} Let a monoid $M$ is factorized by its submonoids $A$ and $B$. The assignment
\[
q \longmapsto \emph{\textsf{Ker}}(q)
\]
yields an isomorphism of groupoids
\[\mathcal{D}^{u,B}_l(M,A)/\!/ U(A)\simeq \emph{\textsf{FAC}}(A/M) /\!/ U(A).
\]
Moreover, this isomorphism induces a bijection of sets
\[
\D^{u,B}_l(M, A) \simeq \pi_0(\emph{\textsf{FAC}}(A/M)/\!/U(A)),
\]
where  $\pi_0(\emph{\textsf{FAC}}(A/M)/\!/ U(A))$ is the set of
connected components of the groupoid $\emph{\textsf{FAC}}(A/M) /\!/ U(A)$.
\end{theorem}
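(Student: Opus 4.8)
The plan is to realize the claimed groupoid isomorphism as the one functorially induced by a $U(A)$-equivariant bijection. By Theorem \ref{com.mon} the assignment $q \longmapsto \textsf{Ker}(q)$ is already a bijection of the underlying sets $\mathcal{D}^{u,B}_l(M,A) \to \textsf{FAC}(A/M)$, and this will serve as the map on objects. Since an action groupoid $X /\!/ G$ has the elements of $X$ as objects and the elements $g \in G$ with $g\cdot x = x'$ as morphisms $x \to x'$, any $G$-equivariant bijection automatically induces an isomorphism of groupoids $X /\!/ G \to Y /\!/ G$ that keeps the labels of morphisms fixed. Thus the whole first assertion reduces to a single verification: that $q \longmapsto \textsf{Ker}(q)$ intertwines the star action of $U(A)$ on $\mathcal{D}^{u,B}_l(M,A)$ (Proposition \ref{left.1}) with the conjugation action of $U(A)$ on $\textsf{FAC}(A/M)$ (Lemma \ref{mcj}).

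The heart of the argument, and the only genuine computation, is therefore to show that
\[\textsf{Ker}(a_0 \star q) = a_0\,\textsf{Ker}(q)\,a_0^{-1}\]
for every $a_0 \in U(A)$ and every $q \in \mathcal{D}^{u,B}_l(M,A)$. I would argue by a direct chain of equivalences. Using the defining formula $(a_0 \star q)(m) = q(ma_0)a_0^{-1}$, membership $m \in \textsf{Ker}(a_0 \star q)$ is equivalent to $q(ma_0) = a_0$. On the other hand, $m \in a_0\,\textsf{Ker}(q)\,a_0^{-1}$ means $a_0^{-1}ma_0 \in \textsf{Ker}(q)$, and applying Condition (L2) to the factor $a_0^{-1} \in A$ gives $q(a_0^{-1}ma_0) = a_0^{-1}q(ma_0)$, so this membership is equivalent to $a_0^{-1}q(ma_0) = \textsf{1}_M$, i.e. again to $q(ma_0) = a_0$. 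The two kernels therefore coincide, which is exactly the required equivariance. (As a byproduct, combined with Lemma \ref{mcj} this re-confirms that $a_0 \star q$ lands in $\mathcal{D}^{u,B}_l(M,A)$.)

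With equivariance in hand, the induced functor sends an object $q$ to $\textsf{Ker}(q)$ and a morphism $a_0 : q \to a_0 \star q$ to the morphism $a_0 : \textsf{Ker}(q) \to a_0\,\textsf{Ker}(q)\,a_0^{-1}$; it is well defined precisely because of the displayed identity, and its inverse is induced in the same way by the inverse bijection of Theorem \ref{com.mon}. Being a bijection on objects and on each hom-set, it is an isomorphism of groupoids, which establishes the first assertion.

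For the final bijection I would use that the connected components of an action groupoid $X /\!/ G$ are exactly the $G$-orbits, so that $\pi_0(X /\!/ G) = X/G$. Applying this to both sides, the groupoid isomorphism descends to a bijection between the orbit set of the star action and $\pi_0(\textsf{FAC}(A/M)/\!/U(A))$. It then remains only to recall that, by the definition of the descent cohomology set together with Proposition \ref{conj.1}, the orbit set of the star action on $\mathcal{D}^{u,B}_l(M,A)$ is precisely $\D^{u,B}_l(M,A)$; substituting this identification yields $\D^{u,B}_l(M,A) \simeq \pi_0(\textsf{FAC}(A/M)/\!/U(A))$, as claimed. The only step demanding real care is the (L2) computation above; everything else is the formal bookkeeping of action groupoids.
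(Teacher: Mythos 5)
Your proof is correct, and its overall strategy coincides with the paper's: use the bijection $q \longmapsto \textsf{Ker}(q)$ of Theorem \ref{com.mon} on objects and check that it is compatible with the two $U(A)$-actions. The difference lies in how that compatibility is handled. The paper's proof is a one-line citation combining Theorem \ref{com.mon} with Proposition \ref{conj.1}, and Proposition \ref{conj.1} is itself not proved here but deferred to \cite{BM}. You instead establish the element-level equivariance $\textsf{Ker}(a_0 \star q) = a_0\,\textsf{Ker}(q)\,a_0^{-1}$ by a direct (L2) computation, which checks out: $m \in \textsf{Ker}(a_0\star q)$ and $m \in a_0\,\textsf{Ker}(q)\,a_0^{-1}$ are each equivalent to $q(ma_0)=a_0$. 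This buys two things. First, your argument is self-contained where the paper's is not. Second, a groupoid isomorphism (as opposed to a mere bijection on $\pi_0$) genuinely requires this morphism-level equivariance rather than only the orbit-level statement of Proposition \ref{conj.1} that equivalent cocycles correspond to conjugate factors; your identity supplies exactly that, and reproves Proposition \ref{conj.1} as a corollary. The remaining bookkeeping (connected components of an action groupoid are the orbits; $\D^{u,B}_l(M,A)$ is by definition the orbit set of the restricted $\star$-action, using Proposition \ref{left.1}) agrees with the paper. One minor caveat: your parenthetical claim that the identity together with Lemma \ref{mcj} ``re-confirms'' $a_0\star q\in\mathcal{D}^{u,B}_l(M,A)$ silently uses that a cocycle in $\mathcal{D}_l(M,A)$ whose kernel $C$ satisfies $(A,C)\in\textsf{FAC}(M)$ must equal $l^C_A$ (which does follow from (L2)); since you rely on Proposition \ref{left.1} for the well-definedness of the restricted action anyway, this aside carries no weight in the proof.
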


\section{SEMI-DIRECT PRODUCT AND NON-ABELIAN COHOMOLOGY OF MONOIDS}\label{comp.}

Let $A$ and $B$ be  monoids.  Recall that a \emph{(left) action of $B$ on $A$} is a left action
$$B \times A\to A, \quad (b,a)\longmapsto {b \star a}$$ of the monoid $B$ on the set $A$
such that:
\begin{itemize}
  \item [(A3)] $ b \star \textsf{1}\!_{A}=\textsf{1}\!_{A}$;
  \item [(A4)] $b \star (a_1  a_2)  =(b \star a_1) (b \star a_2)$
\end{itemize} for all $b \in B$ and all $a,a_1,a_2 \in A.$ Thus, a left action of $B$ on $A$ is a
map \[\star :B \times A\to A \] satisfying Conditions (A1)--(A4).

Note that  Conditions (A3) and (A4) express the fact that for every $b\in B$, the map
$$b \star -: A \to A, \,a \mapsto b \star a $$ is a monoid morphism.
Hence to give a left action of a monoid $B$ on a monoid $A$ is to give a homomorphism
$B \to \textsf{End}(A)$ of monoids, where $\textsf{End}(A)$ is the set of all endomorphisms of the
monoid $A$ which is again a monoid under the usual composition of endomorphisms
as the monoid operation. Given a homomorphism $\phi: B \to \textsf{End}(A)$, one obtains
a (left) action of $B$ on $A$ by setting $b\star a:=\phi(b)(a)$.

When a monoid $B$ acts on a monoid $A$ from the
left, one sometimes says that $A$ is a \emph{(left) $B$-monoid}.

\begin{remark}\label{opp.opp}\em It can be seen immediately that if $ \star :B \times A\to A $
is a monoid action, then $A^{\text{op}}$ becomes a left $B$-monoid via $b \star a^{\text{op}}=
(b \star a)^{\text{op}}$.  This $B$-monoid is called the \emph{opposite} to the $B$-monoid $A$.
\end{remark}

For an arbitrary monoid $B$ acting on a monoid $A$
via a homomorphism $\phi: B \to \textsf{End}(A)$,
a $0$-\emph{cohomology monoid} $\textbf{H}_\phi^0(B,A)$ and a $1$-\emph{cohomology pointed set} $\textbf{H}_\phi^1(B,A)$ were constructed in
\cite{BM} as follows. The \emph{zeroth cohomology of $B$ with coefficients in $A$} is the set
$$\{a \in A:b\star a=a \,\,\text{for all}\,\,b \in B\}.$$
Conditions (A3) and (A4) guarantee that this set is in fact a submonoid of $A$.

Next, the set $\mathcal{Z}_\phi^{1}(B, A)$ of \emph{$1$-cocycles of $B$ with coefficients in $A$} is the set of those maps
$\chi: B \to A$ for which
$$\chi(\textsf{1}_B)=\textsf{1}_A $$ and $$\chi(b_1  b_2)=\chi(b_1) (b_1 \star \chi(b_2))$$
for all $b_1, b_2 \in B$. Clearly $\mathcal{Z}_\phi^{1}(B, A)$ includes at least the \emph{unit $1$-cocycle} which is the map
$$0_{B,A}:B \to A, \,\, b \longmapsto \textsf{1}_A.$$ This map turns
$\mathcal{Z}_\phi^{1}(B, A)$ into a pointed set.

One defines a relation on $\mathcal{Z}_\phi^{1}(B, A)$ by calling two $1$-cocycles $\chi$ and $\chi'$
\emph{cohomologous}, written $\chi \sim\chi'$, if there exists
an invertible element $a_0\in U(A)$ such that $\chi(b) (b \star a_0)=a_0  \chi'(b)$ for all $b \in B$.
It is shown in \cite{BM} that $\sim$ is an equivalence relation on $\mathcal{Z}^{1}(B, A)$.
The resulting set of equivalence classes of 1-cocycles is called the \emph{first cohomology pointed set of $B$
with coefficients in $A$} and it is denoted by $\textbf{H}_\phi^1(B,A)$. Note that $\textbf{H}_\phi^1(B,A)$ is not
in general a group, but is a pointed set with the distinguished element being the class of the map $0_{B,A}$.
We normally omit the subscript $\phi$, when there is no  danger of confusion or when the action is clear from the context.

An important class of examples of monoid factorizations are the ones associated to monoid actions.
For any monoids $A$ and $B$, and a monoid homomorphism \[\phi: B \to \textsf{End}(A),\]
the set of formal products
\[A\phi B=\{ab \,|\, a \in A, b\in B\}\]
carries a monoid structure given by the following data:
\begin{itemize}
  \item[-] the identity is $\textsf{1}_A\textsf{1}_B$;
  \item[-] the multiplication $\bullet$ is defined, for any $a_1,a_2 \in A$ and $b_1,b_2\in B$, by
  \[(a_1 b_1) \bullet (a_2b_2) =a_1  (b_1 \star a_2) b_1 b_2,\] where $\star$ is the (left) action of $B$ on $A$ determined by $\phi$.
\end{itemize}
Then $A\phi B$ is called the \emph{semi-direct product of $A$ by $B$ with action} $\phi$.
It is easy to see that the maps
\[\jmath_A:A \to A\phi B, \,\,a \mapsto a\textsf{1}_B\]
and
\[\jmath_B:B \to A \phi B, \,\, b \mapsto \textsf{1}_Ab\] are  (injective)
homomorphisms of monoids and so $A$ can be identified with the subgroup $A'=\{a\textsf{1}_B: a \in A\}$
of $A\phi B$ by identification of the elements $a$ and $a\textsf{1}_B$.
Similarly, $B$ is identified with the subgroup $B'=\{\textsf{1}_Ab: b \in B\}$ of $A\phi B$ by
identification of $b$ and $\textsf{1}_Ab$. Since for any pair $(a,b)$ in $A \times B$,
\[(a\textsf{1}_B)\bullet (\textsf{1}_Ab)=(a(\textsf{1}_B \star \textsf{1}_A))(\textsf{1}_B b)=(a\textsf{1}_A)(\textsf{1}_B b)=ab,\]
it follows that the multiplication map
\[[\imath_{A'},\imath_{B'}]: A' \times B' \to  A\phi B,\,\, (a\textsf{1}_B,\textsf{1}_Ab)
\longmapsto (a\textsf{1}_B)\bullet (\textsf{1}_Ab)=ab\]
is bijective, or equivalently, each element of $A\phi B$ can be written uniquely as a product $a'\bullet b'$
with $a' \in A',  b'\in B'$. Consequently, $(A', B')$ is a monoid factorization of
$A \phi B$. Then modulo the identifications of $A$, $A'$, $B$ and $B'$, the pair $(A, B)$
becomes a monoid factorization of $A \phi B$. We denote the corresponding maps
\[l^B_A:A\phi B \to A, \,\, ab \mapsto a \]
and
\[r^B_A:A \phi B \to B, \,\, ab \mapsto b,\] respectively by $p_A$ and $p_B$
and call them \emph{projections}. Quite obviously, $p_B$ is a monoid homomorphism.

\bigskip

To simplify calculations, from now, we will identify the monoids $A$ and $B$ with their images
$\jmath_A(A)=A'$ and $\jmath_B(B)=B'$ in $A\phi B$.

Let $M$ be a monoid and let $X$ be a subset of $M$. One says that a submonoid $N$ of $M$ is \emph{left $X$-normal}
(resp. \emph{right $X$-normal}) in $M$ if $xN \subseteq Nx$ (resp. $Nx \subseteq xN$) in $M$ for all $x \in X$.
A submonoid of a monoid is called $X$-\emph{normal}, if it is both right and left $X$-normal.
A factorization $(A,B)$ of $M$ is said to be \emph{left} (resp. \emph{right}) \emph{normal} if $A$ (resp. $B$) is left
(resp. right) $B$-normal (resp. $A$-normal) in $M$.

\begin{lemma}\label{normal} Let $(A,B)$ be a factorization of a monoid $M$ such that $A$ is a group. Then
$A$ is left $B$-normal in $M$ if and only if it is left $M$-normal in $M$.
\end{lemma}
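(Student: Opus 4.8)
The plan is to treat the two implications separately, since one is immediate and the other carries all of the content. Because $B$ is a submonoid of $M$, every element of $B$ is in particular an element of $M$; hence if $A$ is left $M$-normal, so that $mA\subseteq Am$ for all $m\in M$, then restricting attention to $m\in B$ already gives $bA\subseteq Ab$ for all $b\in B$, i.e.\ left $B$-normality. This direction uses neither the factorization nor the group hypothesis.

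For the converse, assume $A$ is left $B$-normal, so $bA\subseteq Ab$ for every $b\in B$, and fix an arbitrary $m\in M$ together with $a'\in A$; the goal is $ma'\in Am$. First I would use that $(A,B)$ is a factorization to write $m$ uniquely as $m=ab$ with $a\in A$ and $b\in B$. Next I would push $a'$ leftward past $b$: since $ba'\in bA\subseteq Ab$, there exists $\tilde{a}\in A$ with $ba'=\tilde{a}\,b$, and therefore $ma'=ab\,a'=a\tilde{a}\,b$.

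The group hypothesis on $A$ enters in the final—and only subtle—step, namely reassembling $a\tilde{a}\,b$ as a left multiple of $m$. Since $A$ is a group, $a$ is invertible in $M$, so setting $a_0:=a\tilde{a}a^{-1}\in A$ gives $a\tilde{a}\,b=a_0(ab)=a_0 m\in Am$. As $m$ and $a'$ were arbitrary, this proves $mA\subseteq Am$ for all $m\in M$, which is left $M$-normality. The whole argument hinges on this conjugation: without invertibility of $a$ one can move $a'$ across $b$ but cannot re-express the outcome as something times $m=ab$, so the assumption that $A$ is a group is used precisely here and nowhere else.
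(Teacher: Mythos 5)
Your proof is correct and follows essentially the same route as the paper's: decompose $m=ab$ via the factorization, use left $B$-normality to move the element of $A$ past $b$, and then use invertibility of $a$ to rewrite the result as an element of $Am$ (the paper phrases this set-wise as $mA\subseteq l(m)Ar(m)=Ar(m)=Al(m)^{-1}l(m)r(m)=Am$, which is your conjugation step in aggregate form). No gaps.
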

\begin{proof} One direction is immediate from the definition. For the converse, we note first that
since $A$ is group, we have $aA=A=Aa$ for all $a \in A$.
Now let $m$ be an arbitrary element of $M$. Then $m=l(m)r(m)$ with $l(m)\in A$ and $r(m) \in B$ and we have:
\[
\begin{split}
mA=l(m)r(m)A&\subseteq l(m)Ar(m)\\
&=A r(m)=(Al(m)^{-1})l(m)r(m)=Am.
\end{split}
\]
Here the inclusion follows from the fact that $A$ is left $B$-normal in $M$. Thus, $mA\subseteq Am$ for all
$m \in M$, proving $M$-normality of $A$.

\end{proof}

\begin{theorem}\label{semi-d.}  For a factorization $(A,B)$ of a monoid $M$ the following sentences are equivalent:
\begin{enumerate}
  \item [(i)] $r=r^B_A:M \to B$ is a homomorphism of monoids.
  \item [(ii)] $(A,B)$ is left normal.
  \item [(iii)] The map $\phi=\phi^B_A:B \to \emph{\textsf{End}}(A), \,\, \phi(b)=(a \to l(ba))$ is an action
  of the monoid $B$ on the monoid $A$ for which $M$ is isomorphic as a monoid to $A {\phi} B$ via
  the map $A \phi B \xr{(ab \mapsto \imath_A(a)\imath_B(b)} M$.
\end{enumerate}
\end{theorem}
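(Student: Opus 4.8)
The plan is to prove the cyclic chain of implications (i) $\Rightarrow$ (ii) $\Rightarrow$ (iii) $\Rightarrow$ (i), which makes all three conditions equivalent. Throughout I write $l=l^B_A$, $r=r^B_A$ and freely use $m=l(m)r(m)$ together with (L1)--(L3), (R1)--(R3) and the identities $\textsf{Ker}(l)=B$, $\textsf{Ker}(r)=A$ from Propositions \ref{l}, \ref{r} and \ref{ker}. The technical heart of the argument is a single \emph{straightening relation}: for all $a\in A$ and $b\in B$,
\[
ba=l(ba)\,b.
\]
Once this is in hand, each step reduces to a short verification.

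For (i) $\Rightarrow$ (ii) I would assume $r$ is a homomorphism and compute $r(ba)=r(b)r(a)=b\cdot\textsf{1}_M=b$, using (R1) and $a\in A=\textsf{Ker}(r)$; hence $ba=l(ba)r(ba)=l(ba)\,b\in Ab$, which is exactly left $B$-normality of $A$. Under hypothesis (ii) the same relation falls out directly: if $A$ is left $B$-normal then $ba\in bA\subseteq Ab$, so $ba=a'b$ for some $a'\in A$, and uniqueness of the factorization forces $l(ba)=a'$, $r(ba)=b$, giving $ba=l(ba)\,b$.

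The main part is (ii) $\Rightarrow$ (iii). I would first set $b\star a:=l(ba)=\phi(b)(a)$, so the straightening relation reads $ba=(b\star a)b$, and then verify three things in turn. First, each $\phi(b)$ is an endomorphism of $A$: one has $\phi(b)(\textsf{1}_A)=l(b)=\textsf{1}_M$ since $b\in\textsf{Ker}(l)$, and for products I would rewrite $b(a_1a_2)=(b\star a_1)\,ba_2=(b\star a_1)(b\star a_2)\,b$, compare with $(b\star(a_1a_2))\,b$, and strip the trailing $b$ by uniqueness of the factorization. Second, $\phi$ is an action: (A1) $\phi(\textsf{1}_B)=\textsf{Id}_A$ is immediate from (L1), and (A2) is precisely an instance of (L3), namely $\phi(b_1b_2)(a)=l(b_1b_2a)=l(b_1\,l(b_2a))=\phi(b_1)(\phi(b_2)(a))$. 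Third, the map $f:A\phi B\to M$, $ab\mapsto\imath_A(a)\imath_B(b)$, is an isomorphism: as a function of the pair $(a,b)$ it is just the bijection $[\imath_A,\imath_B]$ of the factorization, hence bijective, and the homomorphism property is again the straightening relation, $f(a_1b_1)f(a_2b_2)=a_1(b_1a_2)b_2=a_1(b_1\star a_2)b_1b_2=f\big((a_1b_1)\bullet(a_2b_2)\big)$.

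Finally, (iii) $\Rightarrow$ (i) is formal: under the isomorphism $f$ the map $r$ corresponds to the second projection, i.e. $r=p_B\circ f^{-1}$, and since $p_B$ is a monoid homomorphism (as noted after the construction of $A\phi B$) and $f^{-1}$ is a homomorphism, so is $r$. The only real obstacle I anticipate is the product case in step (ii) $\Rightarrow$ (iii): showing $\phi(b)$ respects products needs both the straightening relation \emph{and} an appeal to uniqueness of the factorization to cancel the common right factor $b$, since $M$ is merely a monoid and no cancellation is available a priori.
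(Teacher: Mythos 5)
Your proof is correct, and the underlying mechanism is the same as the paper's: everything rests on the straightening relation $ba=l(ba)\,b$, which both you and the paper extract from left $B$-normality (respectively from $r$ being a homomorphism), together with (L1)--(L3) for the action axioms. The differences are organizational but worth recording. First, you prove the cycle (i) $\Rightarrow$ (ii) $\Rightarrow$ (iii) $\Rightarrow$ (i), whereas the paper establishes (i) $\Leftrightarrow$ (ii) directly in both directions; its (ii) $\Rightarrow$ (i) direction is a separate computation showing $r(mm')=r(m)r(m')$ by writing $r(m)l(m')=a\,r(m)$ and applying (R2), a step your routing through (iii) renders unnecessary. Second, for (A4) the paper substitutes $ba_1=a'b$ and applies (L2) to get $l(ba_1a_2)=a'\,l(ba_2)$, while you apply the straightening relation twice and cancel the trailing $b$ by uniqueness of the factorization; these are equivalent, and your anticipated obstacle about the absence of cancellation in $M$ is correctly resolved by invoking uniqueness of the decomposition rather than cancellativity. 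Third --- and this is a point in your favour --- you explicitly verify that $ab\mapsto\imath_A(a)\imath_B(b)$ is a monoid isomorphism $A\phi B\to M$ (bijectivity being exactly the bijectivity of $[\imath_A,\imath_B]$, multiplicativity again the straightening relation), whereas the paper's proof of (ii) $\Rightarrow$ (iii) only checks the action axioms (A1)--(A4) and leaves the isomorphism claim, which is part of statement (iii), implicit.
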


\begin{proof} First we prove that (i) and (ii) are equivalent. If $r:M \to B$ is a homomorphism of monoids, then for any $a \in A$ and $b \in B$,  we have:
$$r(ba)=r(b)r(a)\stackrel{(\ref{lr})}=r(b)\textsf{1}_M=r(b)\stackrel{(R1)}=b.$$ It then  follows
that $ba=l(ba)r(ba)=l(ba)b$, implying -- since $a\in A$ and $b \in B$ were arbitrary and since $l(ba)\in A$ --
that $bA\subseteq Ab$ for all $b \in B$. Thus, $A$ is left $B$-normal in $M$, or equivalently, $(A,B)$ is left normal.

Conversely, suppose that $A$ is left $B$-normal in $M$. Since $r(\textsf{1}_M)=\textsf{1}_M$ by (R1), $r$ preserves units.
Next, consider arbitrary two elements $m,m'\in M$.  Then $m=l(m)r(m)$ and $m'=l(m')r(m')$ and hence
$mm'=l(m)r(m)l(m')r(m')$. Since $r(m)l(m')\in r(m)A$ and $r(m)A \subseteq Ar(m)$ by $B$-normality of $A$,
it follows that $r(m)l(m')=a\cdot r(m)$ for some $a \in A$. Then
\[mm'=l(m)\cdot a \cdot r(m)\cdot r(m')\] implying --
since $l(m)\cdot a \in A$ (and hence $r(l(m)\cdot a)=\textsf{1}_M$ by (\ref{lr})) and
$r(m)\cdot r(m')\in B$ -- that
\[
\begin{split}
r(mm')=&r(l(m)\cdot a \cdot r(m)\cdot r(m'))\\
 \stackrel{(R2)}=&r(l(m)a)\cdot r(m)\cdot r(m')\\
 =&\textsf{1}_B \cdot r(m)\cdot r(m')=r(m)\cdot r(m').
\end{split}
\]
Consequently, $r$ is a monoid homomorphism.

(ii)$\Longrightarrow$(iii). We have to show that the monoid action corresponding to the map $\phi^B_A$ satisfies Conditions from (A1) to (A4).

Since for any $a \in A$, $\textsf{1}\!_{B}\star a=l(\textsf{1}\!_{M}a)=l(a)\stackrel{(L1)}=a$, (A1) holds. Similarly, since
for any $b \in B$, $b\star \textsf{1}\!_{A}=l(b\textsf{1}\!_{M})=l(b)\stackrel{(\ref{lr})}=\textsf{1}\!_{A}$, (A3) also holds.

Next, for any $b_1,b_2 \in B$ and $a\in A$, we have:
\[
\begin{split}
(b_1 b_2)\star a=l((b_1 b_2)a)=&l (b_1 (b_2a))\stackrel{(L3)}=l(b_1 l(b_2a))\\
=&l(b_1 (b_2 \star a))=b_1 \star (b_2 \star a),
\end{split}
\]
where the first, fourth and fifth equalities follow from the definition of the action, while the second
one holds by associativity of multiplication in $M$. Thus, (A2) holds.

Finally, to prove that (A4) also holds, we proceed as follows. For any $b \in B$ and $a_1,a_2 \in A,$
consider $b \star (a_1  a_2)=l(ba_1  a_2)$. Since $A$ is left $B$-normal in $M$, $ba_1=a'b$ for some
$a' \in A$. Then clearly $l(ba_1)=a'$ and we have:
\[
\begin{split}
b \star (a_1  a_2)=l(ba_1  a_2)=&l(a'ba_2)\stackrel{(L2)}=a'\cdot l(ba_2)\\
=&l(ba_1)\cdot l(ba_2)=(b \star a_1) (b \star a_2).
\end{split}
\]
This proves that (A4) also holds.

(iii)$\Longrightarrow$(i). We have already pointed out that for the semi-direct product $A {\phi} B$, the 
map $p_B=r^B_A: A {\phi} B \to B$ is a homomorphism of monoids.
\end{proof}

As an immediate consequence of the above theorem we observe that

\begin{corollary}\label{semi-d.1} A monoid $M$ is (isomorphic as a monoid to) a semi-direct product of
monoids if and only if there exists a left normal factorization of $M$.
\end{corollary}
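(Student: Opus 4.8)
The plan is to obtain both implications as direct consequences of the equivalences in Theorem~\ref{semi-d.}; no new computation about the semi-direct product is needed, and the only genuinely substantive point is that the property of admitting a (left normal) factorization is invariant under monoid isomorphism.

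I would first dispose of the "if" direction. Suppose $M$ admits a left normal factorization $(A,B)$. Then condition~(ii) of Theorem~\ref{semi-d.} holds, so the implication (ii)$\Longrightarrow$(iii) supplies a monoid action $\phi=\phi^B_A\colon B\to\textsf{End}(A)$ together with a monoid isomorphism $A\phi B\xr{\sim}M$. Thus $M$ is isomorphic as a monoid to the semi-direct product $A\phi B$, as required.

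For the "only if" direction, assume $M$ is isomorphic as a monoid, via some $f\colon A\phi B\xr{\sim}M$, to a semi-direct product $A\phi B$ for monoids $A,B$ and an action $\phi$. As recorded in the discussion preceding Theorem~\ref{semi-d.}, once $A$ and $B$ are identified with the submonoids $\jmath_A(A)$ and $\jmath_B(B)$, the pair $(A,B)$ is a factorization of $A\phi B$ whose associated projection $p_B=r^B_A$ is a homomorphism of monoids. Hence condition~(i) of Theorem~\ref{semi-d.} is satisfied for this factorization, and the implication (i)$\Longrightarrow$(ii) shows that $(A,B)$ is a \emph{left normal} factorization of $A\phi B$.

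It then remains to transport this factorization along $f$, and this is the one step I would carry out with some care. Because $f$ is a bijective monoid homomorphism, $(f(A),f(B))$ is again a factorization of $M$: its multiplication map is the composite of $f^{-1}\times f^{-1}$, the multiplication map of $(A,B)$, and $f$, hence a composite of bijections. Left normality transfers as well, since $f$ carries each inclusion $bA\subseteq Ab$ (for $b\in B$) to $f(b)f(A)\subseteq f(A)f(b)$. Therefore $(f(A),f(B))$ is a left normal factorization of $M$, which completes the proof. I do not expect any real obstacle here: the whole argument is a bookkeeping of Theorem~\ref{semi-d.} combined with this isomorphism-invariance of left normal factorizations.
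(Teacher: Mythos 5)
Your proposal is correct and is exactly the argument the paper intends: the corollary is stated as an immediate consequence of Theorem~\ref{semi-d.}, with the "if" direction from (ii)$\Longrightarrow$(iii) and the "only if" direction from the fact that $(A,B)$ is a factorization of $A\phi B$ with $p_B$ a homomorphism, hence left normal by (i)$\Longrightarrow$(ii). Your additional care in transporting the left normal factorization along the isomorphism $f$ is a sound (and welcome) filling-in of a step the paper leaves implicit.
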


One calls a homomorphism $p:M \to B$ of monoids \emph{split} if there is some monoid homomorphism $s:B \to M$ with
$ps=\textsf{Id}_B$.

\begin{lemma}\label{sch.} For a split homomorphism of monoids $\xymatrix{M \ar@<0.5ex>[r]^-{p} & B \ar@<0.5ex>[l]^-{s},\, ps=\emph{\textsf{Id}}_B,}$
the following sentences are equivalent:
\begin{itemize}
  \item [(i)] The set $\emph{\textsf{Ker}}(p)$ is a group and $(\emph{\textsf{Ker}}(p),s(B))$ is a monoid factorization of $M$.
  \item [(ii)] Let $m_1,m_2 \in M$. If $p(m_1)=p(m_2)$, then there exists a unique $k\in \emph{\textsf{Ker}}(p)$
  with $m_2=km_1$.
\end{itemize} Under either Conditions \emph{(i)} or\emph{ (ii)}, $\emph{\textsf{Ker}}(p)$ is left $s(B)$-normal in $M$.
\end{lemma}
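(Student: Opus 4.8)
The plan is to prove the cycle of equivalences by establishing (i)$\Rightarrow$(ii), (ii)$\Rightarrow$(i), and then the final ``normality'' claim under either condition. Throughout I would exploit the split structure: $ps=\textsf{Id}_B$ gives that $s$ is injective, so $s(B)$ is a submonoid isomorphic to $B$, and every $m\in M$ satisfies $p(s(p(m)))=p(m)$, so $s(p(m))$ is a canonical element of the ``fiber'' of $m$.

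\smallskip

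First I would prove (i)$\Rightarrow$(ii). Assuming $\textsf{Ker}(p)$ is a group and $(\textsf{Ker}(p),s(B))$ is a factorization of $M$, take $m_1,m_2\in M$ with $p(m_1)=p(m_2)$. Using the factorization, write $m_i=k_i\cdot s(b_i)$ uniquely with $k_i\in\textsf{Ker}(p)$ and $b_i\in B$. Applying $p$ and using $p(k_i)=\textsf{1}_B$ together with $ps=\textsf{Id}_B$ yields $b_1=p(m_1)=p(m_2)=b_2$, so the $s(B)$-parts agree. Then the candidate element is $k:=k_2k_1^{-1}\in\textsf{Ker}(p)$ (here I use that $\textsf{Ker}(p)$ is a \emph{group}), and one checks $km_1=k_2k_1^{-1}k_1s(b_1)=k_2 s(b_2)=m_2$. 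For uniqueness, if $k'm_1=m_2$ with $k'\in\textsf{Ker}(p)$, then $k'k_1s(b_1)=k_2s(b_1)$; cancelling $s(b_1)$ on the right via the uniqueness in the factorization gives $k'k_1=k_2$, hence $k'=k_2k_1^{-1}=k$.

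\smallskip

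Next, (ii)$\Rightarrow$(i). I would first show $\textsf{Ker}(p)$ is a group: for $k\in\textsf{Ker}(p)$ we have $p(k)=\textsf{1}_B=p(\textsf{1}_M)$, so by (ii) applied to the pair $(k,\textsf{1}_M)$ there is a unique $k'\in\textsf{Ker}(p)$ with $\textsf{1}_M=k'k$, giving a left inverse; a symmetric application (or a standard monoid argument) promotes this to a two-sided inverse, so $\textsf{Ker}(p)$ is a group. For the factorization, surjectivity of $[\imath_{\textsf{Ker}(p)},\imath_{s(B)}]$ follows by writing any $m$ as $m=\big(m\,s(p(m))^{-1}\big)\cdot s(p(m))$ after verifying $m\,s(p(m))^{-1}\in\textsf{Ker}(p)$ (apply $p$, use homomorphism property and $ps=\textsf{Id}_B$). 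For injectivity, suppose $k_1s(b_1)=k_2s(b_2)$; applying $p$ forces $b_1=b_2$, and then (ii) on the pair with common $p$-value forces the $k_i$ to agree via the uniqueness clause. This is where I expect the main obstacle: one must be careful that (ii) supplies uniqueness in the correct variance (left multiplication by a kernel element), so matching it to the uniqueness demanded by the factorization requires invoking the group structure of $\textsf{Ker}(p)$ just established, rather than assuming cancellation in $M$.

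\smallskip

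Finally, for the normality claim, I would assume (equivalently) the factorization holds and show $s(b)\,\textsf{Ker}(p)\subseteq\textsf{Ker}(p)\,s(b)$ for all $b\in B$. Given $k\in\textsf{Ker}(p)$, compute $p(s(b)\,k)=b$ using $p(k)=\textsf{1}_B$ and $ps=\textsf{Id}_B$; hence $s(b)k$ and $s(b)$ have the same image under $p$. By (ii) (with the roles arranged appropriately, or directly by the factorization) one extracts $k'\in\textsf{Ker}(p)$ with $s(b)k=k'\,s(b)$, exhibiting the inclusion $s(b)\textsf{Ker}(p)\subseteq\textsf{Ker}(p)s(b)$. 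Since this holds for every $b\in B$, the submonoid $\textsf{Ker}(p)$ is left $s(B)$-normal in $M$, completing the proof. The only delicate point is ensuring the element $k'$ lands in $\textsf{Ker}(p)$, which again follows by applying $p$ and using that $\textsf{Ker}(p)$ is a group.
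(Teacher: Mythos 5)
Your proposal follows the paper's route almost step for step, but there is one genuine flaw in the direction (ii)$\Rightarrow$(i). For surjectivity of the multiplication map you write an arbitrary $m\in M$ as $m=\bigl(m\,s(p(m))^{-1}\bigr)\cdot s(p(m))$, which presupposes that $s(p(m))$ is invertible in $M$. It is not: $B$ is only a monoid and $s(B)$ is merely a submonoid of $M$, so $s(p(m))$ has no inverse in general (e.g.\ in a semi-direct product $A\phi B$ with $B$ not a group, $p=p_B$, $s=\jmath_B$, the element $s(b)$ is invertible only when $b$ is). The whole point of hypothesis (ii) is that it replaces this missing invertibility: since $p(m)=p(sp(m))$, condition (ii) applied to the pair $m_1=sp(m)$, $m_2=m$ directly produces a unique $k\in\textsf{Ker}(p)$ with $m=k\cdot sp(m)$, which is exactly how the paper obtains surjectivity. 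You even set up the right observation (``$s(p(m))$ is a canonical element of the fiber of $m$'') but then execute it by inverting an element that need not be invertible.

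The remaining pieces are fine and agree with the paper: (i)$\Rightarrow$(ii) via the decomposition $m_i=k_is(b_i)$, the common value $b_1=b_2$, the candidate $k=k_2k_1^{-1}$, and uniqueness from the factorization; the group structure of $\textsf{Ker}(p)$ from (ii) applied to $(k,\textsf{1}_M)$ plus the standard left-inverse-implies-two-sided argument (the paper leaves this as ``easy''); injectivity by applying $p$ to force $b_1=b_2$ and then using the uniqueness clause of (ii) on the pair $s(b)=\textsf{1}_M\cdot s(b)=(k_1^{-1}k_2)\,s(b)$ — you correctly flag that this needs the group structure of $\textsf{Ker}(p)$ rather than cancellation in $M$, though you should spell out that the pair to which (ii) is applied is $m_1=m_2=s(b)$; and the normality claim exactly as in the paper. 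Fix the surjectivity step and the proof is complete.
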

\begin{proof} (i)$\Longrightarrow $(ii). Suppose that $m_1,m_2 \in M$ are such that $p(m_1)=p(m_2)$. Since
the pair $(\textsf{Ker}(p),s(B))$ is a monoid factorization of $M$ by the inductive hypothesis, there are some $k_1,k_2 \in \textsf{Ker}(p)$
such that $m_1=k_1s(b),\, m_2=k_2s(b)$, where $b$ is the common value of $p(m_1)$ and $p(m_2)$.
Then $m_2=km_1$, where $k=k_2k_1^{-1} \in \textsf{Ker}(p)$.
If $k'\in \textsf{Ker}(p)$ is such that $m_2=k'm_1$, then $k_2 s(b)=k'k_1s(b)$ and $k_2=k'k_1$ by the uniqueness
property of monoid factorizations. Therefore, $k'=k_2k_1^{-1}=k$.

(ii)$\Longrightarrow $(i). It is easy to show that under Condition (ii), $\textsf{Ker}(p)$ is a group.
So it suffices to show that the pair $(\textsf{Ker}(p),s(B))$ is a monoid factorization of $M$.
Since for any $m\in M$, $p(m)=p(sp(m))$, there exist a unique element $k \in \textsf{Ker}(p)$ with $m=k \cdot sp(m).$
Next, if $k_1 \cdot s(b_1)=k_2 \cdot s(b_2)$ with $k_1,k_2 \in \textsf{Ker}(p)$ and $b_1,b_2 \in B$, then
\[b_1=p(k_1 \cdot s(b_1))=p(k_2 \cdot s(b_2))=b_2.\]
Therefore $s(b)=k^{-1}_1k_2s(b)$
and $k^{-1}_1k_2=\textsf{1}_M$ by the uniqueness condition in (ii).
Hence $k_1=k_2$. Consequently, any element of $M$ can be written uniquely as a product of an element
of $\textsf{Ker}(p)$ and an element of $s(B)$, proving that $(\textsf{Ker}(p),s(B))$ is a monoid factorization
of $M$.

Finally, assuming Condition (ii), if $k \in \textsf{Ker}(p)$ and $b \in B$ are arbitrary elements, then since
\[p(s(b)\cdot k)=p(s(b))\cdot p(k)=p(s(b)),\]
there exists a unique element $k'\in \textsf{Ker}(p)$ with $s(b)\cdot k=k'\cdot s(b)$. It follows that
\[s(b)\cdot\textsf{Ker}(p) \subseteq \textsf{Ker}(p)\cdot s(b) \,\,\text{for all}\,\, b \in B, \]
proving that $\textsf{Ker}(p)$ is left  $s(B)$-normal.
\end{proof}

Given a submonoid $A$ of a monoid $M$, we call a descent 1-cocycle $q\in \mathcal{D}_l(M,A)$ \emph{left normal}
if $A$ is left $\textsf{Ker}(q)$-normal.

\begin{theorem}\label{bijection} For a monoid $M$, the following  sets are in bijective correspondence:
\begin{itemize}
  \item [(i)] the set of left normal factorizations of $M$ whose first factor is a group;
  \item [(ii)] the set of left normal descent 1-cocycles with domain $M$ and codomain a group, and
  \item [(ii)] the set of split epimorphisms of monoids with domain $M$ satisfying Condition (ii) of Lemma \ref{sch.}.
\end{itemize}These bijective correspondences are explicitly stated in the following table.
\[\xymatrix @R=.4in @C=.3in{
 *+[F]\txt{\text{Left normal fac-}\\\text{torizations } \\ $(A,B)$ of $M$  \\ with  $A$ a group} \ar@<1ex>[rr]^-{(A,B) \to \, l^B_A}  &&
*+[F]\txt{\text{Left normal des-}\\\text{cent 1-cocycles }\\ $q:M \to A$ \\with $A$ a group}
 \ar@<1ex>[ll]^-{\, (A, \textsf{Ker}(q))\leftarrow q\,} \ar@<1ex>[rr]^-{\,\,\,q \to (q^\dag \!, \imath_{\emph{\textsf{Ker}}(q^\dag)})\,\,}
 &&*+[F]\txt{\text{Split epimorphisms}\\ $B \xr{s}M \xr{p} B $\\  satisfying Condition  \\(ii) of Lemma \ref{sch.}}
 \ar@<1ex>[ll]^-{\,l_{\emph{\textsf{Ker}(p)}}^B \leftarrow (p,s)\,}}\]
\end{theorem}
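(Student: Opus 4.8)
The plan is to establish the three bijections by exhibiting explicit maps in both directions and invoking the results already proved for the first leg, then handling the second leg via Remark \ref{subgroup.r.} and Lemma \ref{sch.}. For the correspondence between left normal factorizations $(A,B)$ of $M$ with $A$ a group and left normal descent 1-cocycles $q\colon M\to A$ with $A$ a group, I would first note that Theorem \ref{subgroup.t.} already gives a bijection $\mathcal{D}_l(M,L)\simeq\textsf{FAC}(L/M)$ when $L$ is a subgroup, via $q\mapsto(L,\textsf{Ker}(q))$ and $(L,B)\mapsto l^B_L$. All that remains for this leg is to check that this bijection restricts to the left normal sub-collections, i.e. that $(A,B)$ is left normal (meaning $A$ is left $B$-normal) if and only if the corresponding cocycle $q=l^B_A$ is left normal (meaning $A$ is left $\textsf{Ker}(q)$-normal). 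Since $B=\textsf{Ker}(l^B_A)$ by Proposition \ref{ker}, the two normality conditions are literally the same condition, so the restriction is immediate once the unrestricted bijection is in hand.

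For the second leg, connecting left normal descent 1-cocycles $q\colon M\to A$ (with $A$ a group) to split epimorphisms $B\xrightarrow{s}M\xrightarrow{p}B$ satisfying Condition (ii) of Lemma \ref{sch.}, I would send $q$ to the pair $(q^\dagger,\imath_{\textsf{Ker}(q^\dagger)})$, using the map $q^\dagger\colon M\to\textsf{Ker}(q)$ of Remark \ref{subgroup.r.} which realizes $r^{\textsf{Ker}(q)}_A$ for the factorization $(A,\textsf{Ker}(q))$. Here the intended split epimorphism has target $B=\textsf{Ker}(q)$, projection $p=q^\dagger$ and section the inclusion of $\textsf{Ker}(q)$; I would need to verify that $q^\dagger$ really is a monoid homomorphism, which is exactly where left normality is used: by Theorem \ref{semi-d.}, the factorization $(A,\textsf{Ker}(q))$ is left normal precisely when $r^{\textsf{Ker}(q)}_A=q^\dagger$ is a homomorphism, so the left normality hypothesis on $q$ makes $q^\dagger$ a genuine split epimorphism. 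The section $s$ is the canonical embedding $\imath_{\textsf{Ker}(q)}$, and $q^\dagger s=\textsf{Id}$ because $q^\dagger$ restricts to the identity on $\textsf{Ker}(q)$ by (R1). Finally I would check that the resulting split epimorphism satisfies Condition (ii) of Lemma \ref{sch.}: since $(A,\textsf{Ker}(q))=(A,\textsf{Ker}(p))$ is a factorization with $A=\textsf{Ker}(p)$... more precisely $\textsf{Ker}(q^\dagger)=A$, and $A$ is a group, Lemma \ref{sch.} part (i) holds, hence part (ii) holds.

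In the reverse direction, given a split epimorphism $(p,s)$ satisfying Condition (ii) of Lemma \ref{sch.}, Lemma \ref{sch.} guarantees that $\textsf{Ker}(p)$ is a group and $(\textsf{Ker}(p),s(B))$ is a monoid factorization of $M$, and moreover that $\textsf{Ker}(p)$ is left $s(B)$-normal. I would send $(p,s)$ to the cocycle $l^{s(B)}_{\textsf{Ker}(p)}\colon M\to\textsf{Ker}(p)$, which lies in $\mathcal{D}_l(M,\textsf{Ker}(p))$ by Proposition \ref{l}, has group codomain, and is left normal because $\textsf{Ker}(p)$ is left $s(B)=\textsf{Ker}(l^{s(B)}_{\textsf{Ker}(p)})$-normal. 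To finish I would verify that these two assignments are mutually inverse: the composite starting from $q$ recovers $q$ because $q^\dagger=r^{\textsf{Ker}(q)}_A$ forces $\textsf{Ker}(q^\dagger)=A$ and $l^{\textsf{Ker}(q)}_A=q$ by Theorem \ref{subgroup.t.}, while the composite starting from $(p,s)$ recovers $(p,s)$ since from the factorization $(\textsf{Ker}(p),s(B))$ one reconstructs the section as the embedding of $s(B)$ and the projection as $r^{s(B)}_{\textsf{Ker}(p)}$, which coincides with the original $p$ by the uniqueness in the factorization.

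The main obstacle I anticipate is the bookkeeping of the identifications rather than any deep difficulty: one must be careful that ``split epimorphism satisfying Condition (ii)'' is taken up to the evident notion of isomorphism so that passing through $s(B)$ versus the abstract $B$ does not break the inverse correspondence, and that the normality condition transfers cleanly across all three descriptions. Once it is observed that in every description the relevant normality is ``first factor is left (complementary factor)-normal'' and that the complementary factor is always realized as a kernel, the three conditions become manifestly equivalent and the verifications reduce to the already-established Theorems \ref{subgroup.t.} and \ref{semi-d.} together with Lemma \ref{sch.}.
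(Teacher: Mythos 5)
Your proposal is correct and follows essentially the same route as the paper: the paper's proof is a two-line citation of Proposition \ref{l}, Theorems \ref{subgroup} (equivalently \ref{subgroup.t.}) and \ref{semi-d.}, and Lemma \ref{sch.}, and your argument is precisely the fleshed-out version of those citations, including the key observations that the two left-normality conditions coincide because $B=\textsf{Ker}(l^B_A)$ and that left normality is exactly what makes $q^\dagger$ a homomorphism via Theorem \ref{semi-d.}. Your closing caveat about identifying $B$ with $s(B)$ is a fair point that the paper itself glosses over.
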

\begin{proof} The bijection between (i) and (ii) follows from Proposition \ref{l}, Theorems \ref{subgroup} and \ref{semi-d.}.
The correspondences between (ii) and (iii) are given by Theorem \ref{semi-d.} and  Lemma \ref{sch.}.
\end{proof}

Let $B$ be a monoid acting on another monoid $A$ via a homomorphism $\phi: B \to \textsf{End}(A)$
and $ A\phi B$ the corresponding semidirect product. It is easy to see that any section
$f:B \to  A\phi B$ of the projection $p_B=l^B_A:A\phi B \to B$ (i.e.  a homomorphism $f$ of monoids  with
$p_B f=\textsf{Id}_B$)  has the form $f(b)=\ol{f} (b)b$, where $\ol{f}: B \to A$ is
the composite $p_Af$. Two sections $f$ and $g$ are called \emph{equivalent} if
there is an invertible element $a_0 \in A$ such that $a_0f(b)a_0^{-1}=g(b)$
for all $b \in B$. In other words, two sections are equivalent if they differ by conjugation with an invertible element of $A$.
(Recall that we have identified $A$ with its image $A\textsf{1}_B$ in $A\phi B $.)

The following proposition can be proved in a similar was as \cite[Proposition 2.3]{Brw}.

\begin{proposition}\label{spl.}  In the situation described above,  the map that sends
a $1$-cocycle $\chi:B \to A$ to the map \[\ol{\chi}:B \to A\phi B,\,\, b \mapsto \chi(b)b\] establishes a
bijection of $\mathcal{Z}^1(B,A)$ (resp. $\emph{\textbf{H}}^1(B,A)$)  with the set (of equivalence classes) of sections
of the projection $p_B:A\phi B \to B$. Its inverse takes (the class of) a section $\omega$ of $p_B$ to (the
class of) the composite $\ol{\omega}=p_A\omega$.
\end{proposition}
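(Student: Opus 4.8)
The plan is to establish the bijection between $\mathcal{Z}^1(B,A)$ and sections of $p_B$, and then descend it to the quotient level to get the bijection between $\textbf{H}^1(B,A)$ and equivalence classes of sections. First I would verify that the assignment $\chi \mapsto \ol{\chi}$ actually lands in the set of sections; that is, for a $1$-cocycle $\chi$ the map $\ol{\chi}(b)=\chi(b)b$ is a monoid homomorphism $B \to A\phi B$ satisfying $p_B \ol{\chi}=\textsf{Id}_B$. For the homomorphism property I would compute $\ol{\chi}(b_1)\bullet \ol{\chi}(b_2)=(\chi(b_1)b_1)\bullet (\chi(b_2)b_2)$ using the semidirect-product multiplication rule $(a_1b_1)\bullet(a_2b_2)=a_1(b_1\star a_2)b_1b_2$, which gives $\chi(b_1)(b_1\star \chi(b_2))\,b_1b_2$; the cocycle identity $\chi(b_1b_2)=\chi(b_1)(b_1\star\chi(b_2))$ then identifies this with $\chi(b_1b_2)b_1b_2=\ol{\chi}(b_1b_2)$. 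Unit preservation uses $\chi(\textsf{1}_B)=\textsf{1}_A$, and $p_B\ol{\chi}=\textsf{Id}_B$ is immediate from $p_B(ab)=b$.

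Next I would treat the converse direction. Any section $\omega$ of $p_B$ has, as the paragraph preceding the proposition records, the form $\omega(b)=\ol{\omega}(b)b$ with $\ol{\omega}=p_A\omega:B\to A$, and I would check that $\ol{\omega}$ is a $1$-cocycle. Running the homomorphism identity $\omega(b_1b_2)=\omega(b_1)\bullet\omega(b_2)$ backwards through the same multiplication formula yields $\ol{\omega}(b_1b_2)=\ol{\omega}(b_1)(b_1\star\ol{\omega}(b_2))$, and evaluating at $\textsf{1}_B$ forces $\ol{\omega}(\textsf{1}_B)=\textsf{1}_A$. Since $\chi \mapsto \ol{\chi}$ and $\omega \mapsto \ol{\omega}$ are visibly inverse to one another (one simply reads off the $A$-component), this establishes the bijection $\mathcal{Z}^1(B,A)\simeq\{\text{sections of }p_B\}$.

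Finally I would pass to the quotients. The key point is that the cohomology relation $\chi\sim\chi'$ corresponds exactly to the equivalence of sections. By definition $\chi\sim\chi'$ means there is $a_0\in U(A)$ with $\chi(b)(b\star a_0)=a_0\chi'(b)$ for all $b\in B$. I would translate this into the semidirect product: multiplying the section values, $a_0\bullet\ol{\chi'}(b)\bullet a_0^{-1}=a_0\chi'(b)b a_0^{-1}$, and using the multiplication rule together with the observation that in $A\phi B$ one has $b\bullet a_0^{-1}=(b\star a_0^{-1})b$, I would show this equals $\chi(b)(b\star a_0)(b\star a_0^{-1})b=\chi(b)b=\ol{\chi}(b)$, where the middle cancellation uses that $b\star -$ is a monoid endomorphism so $(b\star a_0)(b\star a_0^{-1})=b\star(a_0a_0^{-1})=\textsf{1}_A$. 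Thus $\ol{\chi}=a_0\bullet\ol{\chi'}\bullet a_0^{-1}$, i.e.\ the two sections are conjugate by the invertible element $a_0$ of $A$, which is precisely the equivalence of sections. Conversely a conjugacy of sections yields the cohomology relation by the same computation read backwards. The main obstacle is getting this last translation exactly right: one must handle the left action $b\star-$ carefully inside $A\phi B$ (in particular the rewriting $b\bullet a_0^{-1}=(b\star a_0^{-1})b$ and the cancellation $(b\star a_0)(b\star a_0^{-1})=\textsf{1}_A$), since that is where the nontrivial semidirect-product structure enters and where the references to \cite[Proposition 2.3]{Brw} lead one to expect the analogy with the group case to require genuine checking rather than routine transcription.
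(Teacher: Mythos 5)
Your proposal is correct and is exactly the routine verification the paper has in mind when it defers the proof to the analogue of \cite[Proposition 2.3]{Brw}: the cocycle identity matches the homomorphism property of $\ol{\chi}$ under the semidirect multiplication, and the cohomology relation matches conjugacy of sections, with the monoid-specific cancellation $(b\star a_0)(b\star a_0^{-1})=b\star(a_0a_0^{-1})=\textsf{1}_A$ handled correctly via (A3) and (A4). No gaps.
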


It was shown in \cite{BM} that a (left) action of a monoid $B$ on a monoid $A$ restricts to a (left) action of
$B$ on the group $U(A)$ and thus we can consider the pointed sets $\mathcal{Z}^1(B, U(A))$ and $\textbf{H}^1(B, U(A))$.

\begin{theorem}\label{spl.com.}  Let $B$ be a monoid, let $A$ be a left $B$-monoid via a homomorphism
$\phi: B \to \emph{\textsf{End}}(A)$ of monoids and let $A \phi B$ be the corresponding semi-direct product. The assignment
\[\chi \longmapsto \{\,\chi(b)b\,:\, b \in B\,\}\subseteq A \phi B\]
establishes an isomorphism $\mathcal{Z}^1(B, U(A)) \simeq \emph{\textsf{FAC}}(A/A\phi B)$ of pointed sets.
\end{theorem}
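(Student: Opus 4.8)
The plan is to construct the two mutually inverse maps between $\mathcal{Z}^1(B, U(A))$ and $\textsf{FAC}(A/A\phi B)$ and verify they preserve base points, using the machinery already established for semi-direct products. Throughout, I identify $A$ and $B$ with their images $A'$ and $B'$ in $A\phi B$, so that $(A,B)$ is a factorization of $A\phi B$ with projections $p_A = l^B_A$ and $p_B = r^B_A$, where $p_B$ is a monoid homomorphism.

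First I would define the forward map. Given a $1$-cocycle $\chi \in \mathcal{Z}^1(B, U(A))$, set $S_\chi = \{\chi(b)b : b \in B\}$. The key point is to show $S_\chi$ is a submonoid for which $(A, S_\chi)$ is a factorization of $A\phi B$. To see $S_\chi$ is a submonoid, I would compute the product $(\chi(b_1)b_1)\bullet(\chi(b_2)b_2)$ using the semi-direct product multiplication $\bullet$ and the cocycle identity $\chi(b_1 b_2)=\chi(b_1)(b_1 \star \chi(b_2))$: the product is $\chi(b_1)(b_1 \star \chi(b_2))\, b_1 b_2 = \chi(b_1 b_2)\, b_1 b_2 \in S_\chi$, and the unit $\chi(\textsf{1}_B)\textsf{1}_B = \textsf{1}_A\textsf{1}_B$ lies in $S_\chi$. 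For the factorization property, the cleanest route is via Proposition \ref{spl.}, which identifies $1$-cocycles (valued in $A$) with sections of $p_B$. Indeed $\ol{\chi}:B \to A\phi B$, $b \mapsto \chi(b)b$, is precisely such a section, so $s := \ol\chi$ is a monoid homomorphism with $p_B s = \textsf{Id}_B$ and image $s(B)=S_\chi$. The hypothesis $\chi(b)\in U(A)$ is what guarantees $S_\chi$ consists of invertible-first-coordinate elements, allowing Lemma \ref{sch.} to apply: I would verify Condition (ii) of that lemma for the split epimorphism $(p_B, s)$, concluding that $\textsf{Ker}(p_B)=A$ is a group and $(A, s(B)) = (A, S_\chi)$ is a monoid factorization. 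Since $\textsf{Ker}(p_B)=A$ already, this is exactly the statement $S_\chi \in \textsf{FAC}(A/A\phi B)$.

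Next I would define the inverse. Given $C \in \textsf{FAC}(A/A\phi B)$, the pair $(A,C)$ is a factorization, so $p_B = r^B_A$ restricted to $C$ is a bijection $C \to B$ (because $\textsf{Ker}(p_B)=A$ and every element of $A\phi B$ factors uniquely as $a\cdot c$); denote its inverse section by $\omega_C : B \to C \subseteq A\phi B$. The associated $1$-cocycle is $\chi_C := p_A \omega_C : B \to A$, and I must check $\chi_C(b)\in U(A)$ for all $b$, i.e.\ that $\chi_C$ lands in $U(A)$. This invertibility is where I expect the main obstacle to lie, and I would extract it from Lemma \ref{sch.}: since $(A,C)$ is a factorization with $A=\textsf{Ker}(p_B)$ a group, the section $\omega_C$ together with $p_B$ satisfies the lemma's hypotheses, and the group structure on $A$ forces each $\chi_C(b)=p_A(\omega_C(b))$ to be invertible. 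Equivalently, one can observe directly that $\omega_C(b)$ is invertible in $A\phi B$ with its first coordinate $\chi_C(b)$ therefore invertible in $A$.

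Finally I would verify the two composites are identities and that base points correspond. For $\chi \mapsto S_\chi \mapsto \chi_{S_\chi}$: the section recovered from the factorization $(A,S_\chi)$ is $\ol\chi$ itself by uniqueness of sections with a given image, so $\chi_{S_\chi}=p_A\ol\chi = \chi$. For $C \mapsto \chi_C \mapsto S_{\chi_C}$: by construction $S_{\chi_C}=\{\chi_C(b)b\}=\{\omega_C(b)\}=\omega_C(B)=C$, the last equality because $\omega_C(B)=C$. The base point of $\mathcal{Z}^1(B,U(A))$ is the unit cocycle $0_{B,A}$, whose image is $\{\textsf{1}_A b : b\in B\}=B$, the base point of $\textsf{FAC}(A/A\phi B)$; so the bijection is an isomorphism of pointed sets. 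The bulk of the work is bookkeeping with the $\bullet$-multiplication and invoking Lemma \ref{sch.} and Proposition \ref{spl.} at the right moments; the one genuinely substantive step is the invertibility of $\chi_C(b)$, which I anticipate handling via the group property of $\textsf{Ker}(p_B)=A$.
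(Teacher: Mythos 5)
Your proposal has a genuine gap: it leans on Lemma \ref{sch.} in a setting where that lemma cannot apply. In Theorem \ref{spl.com.} the monoid $A$ is arbitrary, and $\textsf{Ker}(p_B)=A$; but both conditions of Lemma \ref{sch.} force $\textsf{Ker}(p)$ to be a group. Condition (ii), which you propose to ``verify'' for the split epimorphism $(p_B,\ol{\chi})$, is simply false when $A$ is not a group: take $m_1=a_1b$ and $m_2=a_2b$ with $a_1$ non-invertible; then $p_B(m_1)=p_B(m_2)$ but there need be no $k\in A$ with $a_2=ka_1$, let alone a unique one. (Already $A=\N_0$, $B$ trivial gives a counterexample, even though $(A,S_\chi)$ \emph{is} a factorization there.) The same error reappears in your inverse construction, where you write ``$A=\textsf{Ker}(p_B)$ a group'' and derive invertibility of $\chi_C(b)$ from ``the group structure on $A$''; and the alternative you offer --- that $\omega_C(b)$ is invertible in $A\phi B$ --- is also unjustified (for $C=B$ one has $\omega_C(b)=\textsf{1}_Ab$, which is invertible only when $b$ is). Since the invertibility of $\chi_C(b)$ is, as you yourself say, the one genuinely substantive step, the proof as proposed does not go through.

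Both halves can be repaired, but not by Lemma \ref{sch.}. The forward direction is a direct computation: $ab=(a\chi(b)^{-1})\bullet(\chi(b)b)$ gives existence, and uniqueness follows from $b_1=b_2$ (apply $p_B$) together with right-cancellation by $\chi(b)\in U(A)$ --- no group structure on $A$ is needed. For the inverse direction, the correct argument plays the two factorizations $(A,B)$ and $(A,C)$ of $A\phi B$ against each other to produce a left \emph{and} a right inverse of $\chi_C(b)=l^C_A(\textsf{1}_Ab)$; this is precisely the content of the proof of Theorem \ref{com.mon}. The paper avoids redoing this work by routing through descent cocycles: it uses the bijection $\chi\mapsto\hat{\chi}$, $\hat{\chi}(ab)=a\chi(b)$, between $\mathcal{Z}^1(B,A^{\mathrm{op}})$ and $\mathcal{D}_l(A\phi B,A)$, checks that it restricts to $\mathcal{Z}^1(B,U(A))\simeq\mathcal{D}^{u,B}_l(A\phi B,A)$, and then applies Theorem \ref{com.mon} to land in $\textsf{FAC}(A/A\phi B)$. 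I recommend either adopting that route or importing the two-sided invertibility argument from Theorem \ref{com.mon} explicitly; your Proposition \ref{spl.}-based bookkeeping for the correspondence between cocycles and sections, and the base-point check, are fine as written.
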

\begin{proof}First let us observe that the pointed set  $\mathcal{Z}^1(B, U(A))$
can be identified with the pointed subset of $\mathcal{Z}^1(B, A)$ containing those 1-cocycles
$\chi:B \to A$ that factor through $U(A)$. Write $\mathcal{Z}_u^1(B, A)$ for this pointed subset.

Next, the assignment that takes
$\chi\in\mathcal{Z}^1(B, A^{op})$ to the map
\[\hat{\chi}: A\phi B \to A, \,\, \hat{\chi}(ab)=a\chi(b)\]
induces an isomorphism
\begin{equation}\label{spl.eq.}
\mathcal{Z}^1(B, A^{op}) \simeq \mathcal{D}_l(A\phi B,A)
\end{equation}
of pointed sets  by \cite[Proposition 5.5]{BM}. If $\chi\in\mathcal{Z}^1(B, A^{op})$ is such that $\chi(B)\subseteq U(A^{op})$,
then
\[\hat{\chi}(\textsf{1}_AB)=\chi(B)\subseteq U(A^{op})\]
and $\hat{\chi} \in \mathcal{D}^{u,B}_l(A\phi B,A)$. It follows that the isomorphism
of pointed sets (\ref{spl.eq.}) restricts to an isomorphism $\mathcal{Z}_u^1(B, A^{op}) \simeq \mathcal{D}^{u,B}_l
(A\phi B,A)$ of pointed sets and thus one has commutativity in
the following diagram
\[\xymatrix @R=.4in @C=.5in { \mathcal{Z}^1(B, U(A)) \ar[r]^{\simeq}_{\chi \to \chi^{-1}}& \mathcal{Z}^1(B, U(A^{op}))
\ar@{^{(}->}[d]\ar[r]^{\simeq}_{\chi \to \hat{\chi}} & \mathcal{D}^{u,B}_l(A\phi B,A) \ar@{^{(}->}[d]\\
&\mathcal{Z}^1(B, A^{op}) \ar[r]^{\simeq}_{\chi \to \hat{\chi}} & \mathcal{D}_l(A\phi B,A)}\]
in which the vertical maps are subset inclusions, while the map $\chi \mapsto \chi^{-1}$ takes $\chi:B \to U(A)$ to the map
$\chi^{-1}:B \to U(A^{op})$ defined by $\chi^{-1}(b)=(\chi(b))^{-1}$.
Since, $\mathcal{D}^{u,B}_l(A\phi B,A)\simeq \textsf{FAC}(A/A\phi B)$ as
pointed sets by Theorem \ref{com.mon}, we have the following string of isomorphisms of pointed sets:
\[\mathcal{Z}^1(B, U(A)) \simeq  \mathcal{D}^{u,B}_l(A\phi B,A)\simeq \textsf{FAC}(A/A\phi B),\]
which takes ${B \xr{\chi} U(A)}$ to the kernel of the map
\[A \phi B \xr{ab\mapsto a\chi(b)^{-1}} A,\] which is just $
\{\,\chi(b)b)\,:\, b \in B\,\}$, as it can be easily verified. This completes the proof.
\end{proof}

As an immediate consequence we obtain the following theorem.

\begin{theorem}\label{T-compl.mon.} Let $M=A\phi B$, where $\phi:B \to \emph{\textsf{End}}(A)$ is a fixed homomorphism of monoids.
The assignment $\chi \longmapsto \emph{\textsf{Im}}(\ol{\chi})$,
where the map $\ol{\chi}:A \phi B  \to A$ is defined by $\ol{\chi}(ab)=a\chi(b)^{-1}$, yields an isomorphism of groupoids
$$ \mathcal{Z}^{1}(B, U(A))/\!/U(A)\simeq \emph{\textsf{FAC}}(A/A\phi B) /\!/ U(A),$$ which in turn induces an isomorphism of pointed sets
$$\emph{\textbf{H}}^1(B,U(A)) \simeq \pi_0(\emph{\textsf{FAC}}(A/A\phi B)\!/U(A)),$$ were $\pi_0(\emph{\textsf{FAC}}(A/A\phi B)\!/U(A))$
is the set of the isomorphism classes of objects of the groupoid $\emph{\textsf{FAC}}(A/A\phi B)\!/U(A)$.
\end{theorem}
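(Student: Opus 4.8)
The plan is to upgrade the pointed-set isomorphism of Theorem \ref{spl.com.} to an isomorphism of action groupoids by checking that it intertwines the two relevant $U(A)$-actions, and then to read off the statement about $\pi_0$ as the induced map on orbit sets.

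First I would pin down the two group actions in play. On the right-hand side, $U(A)$ acts on $\textsf{FAC}(A/A\phi B)$ by conjugation (Lemma \ref{mcj}), giving the groupoid $\textsf{FAC}(A/A\phi B)/\!/U(A)$. On the left-hand side I would record the action of $U(A)$ on $\mathcal{Z}^1(B,U(A))$ defined by $(a_0\cdot\chi)(b)=a_0\,\chi(b)\,(b\star a_0^{-1})$. A direct check, using that each $b\star-$ is a monoid endomorphism preserving $U(A)$, shows that $a_0\cdot\chi$ is again a $1$-cocycle valued in $U(A)$ and that this is a left action (here A4 gives $(b\star a_1^{-1})(b\star a_0^{-1})=b\star(a_0a_1)^{-1}$). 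Comparing with the definition of the cohomologous relation, the orbits of this action are exactly the cohomology classes, so that $\pi_0\bigl(\mathcal{Z}^1(B,U(A))/\!/U(A)\bigr)=\textbf{H}^1(B,U(A))$ as pointed sets.

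The heart of the argument, and the only real computation, is to verify that the bijection $\Theta:\mathcal{Z}^1(B,U(A))\to\textsf{FAC}(A/A\phi B)$ of Theorem \ref{spl.com.}, sending $\chi$ to the factor $\{\chi(b)b:b\in B\}$ (equivalently, to the kernel of $ab\mapsto a\chi(b)^{-1}$), is $U(A)$-equivariant, i.e. $\Theta(a_0\cdot\chi)=a_0\,\Theta(\chi)\,a_0^{-1}$ for every $a_0\in U(A)$. This reduces to the semidirect-product identity $b\,a_0^{-1}=(b\star a_0^{-1})\,b$ in $A\phi B$, obtained directly from the definition of $\bullet$: conjugating a generator gives $a_0\,(\chi(b)b)\,a_0^{-1}=a_0\chi(b)(b\star a_0^{-1})\,b=(a_0\cdot\chi)(b)\,b$, and letting $b$ range over $B$ identifies the conjugated factor $a_0\Theta(\chi)a_0^{-1}$ with $\Theta(a_0\cdot\chi)$. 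Since $\Theta$ also preserves base points (indeed $\Theta(0_{B,A})=B$), this is the step I expect to be the main obstacle, though it is short.

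Granting equivariance, the remainder is formal. A $U(A)$-equivariant bijection of sets induces an isomorphism of the associated action groupoids which is the identity on arrows, whence $\mathcal{Z}^1(B,U(A))/\!/U(A)\simeq\textsf{FAC}(A/A\phi B)/\!/U(A)$; one may equally factor $\Theta$ through $\mathcal{D}^{u,B}_l(A\phi B,A)$ and compose with the groupoid isomorphism of Theorem \ref{conj.2}, using Proposition \ref{conj.1} to match the cohomologous relation with conjugacy. Finally, passing to connected components and using that $\pi_0$ of an action groupoid is its orbit set, the left-hand orbit set is $\textbf{H}^1(B,U(A))$ by the first step, yielding the claimed bijection $\textbf{H}^1(B,U(A))\simeq\pi_0\bigl(\textsf{FAC}(A/A\phi B)/\!/U(A)\bigr)$ of pointed sets.
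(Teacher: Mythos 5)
Your proof is correct and takes essentially the route the paper intends: the paper offers no written argument (it presents the theorem as an ``immediate consequence'' of Theorem \ref{spl.com.} combined with the groupoid formalism of Theorem \ref{conj.2}), and your writeup supplies precisely the content that makes this immediate, namely the $U(A)$-action $(a_0\cdot\chi)(b)=a_0\chi(b)(b\star a_0^{-1})$ on $\mathcal{Z}^1(B,U(A))$ whose orbits are the cohomology classes, and the equivariance of $\chi\mapsto\{\chi(b)b\}$ via the identity $b\bullet a_0^{-1}=(b\star a_0^{-1})b$ in $A\phi B$. Note also that your reading of the assignment as $\chi\mapsto\textsf{Ker}(\ol{\chi})$ rather than $\textsf{Im}(\ol{\chi})$ is the right one and consistent with Theorem \ref{spl.com.}; as printed, $\textsf{Im}(\ol{\chi})$ would be all of $A$, so the statement contains a typo that you have silently corrected.
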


\section{APPLICATIONS}
In this section we present certain applications for the results obtained in the previous sections.

Recall that a monoid $A$ is \emph{conical} if $\textsf{1}_A$ is the only invertible element in $A$, i.e.,
$U(A)=\{\textsf{1}_A\}$. The basic example of a conical monoid is the set of non-negative integers $\N_0$,
under the addition operation. Other interesting examples can be obtained from unital rings, as in the
following example:

\begin{example} For a ring $R$, the set $K(R)$ of isomorphism types of finitely generated projective left $R$-modules form a
(commutative) monoid with unit $[0]$ under the operation $[M] + [M'] := [M\oplus M']$.
It is clear from the definition that $K(R)$ is conical.
\end{example}

\begin{theorem}\label{conical} Let $A$ be a submonoid of a monoid $M$. If $A$ is conical, then $|\emph{\textsf{FAC}}(A/M)|\leq 1$.
\end{theorem}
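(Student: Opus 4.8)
The plan is to split on whether $\textsf{FAC}(A/M)$ is empty and, in the nonempty case, transport the counting problem from factorizations to descent cocycles via Theorem \ref{com.mon}, where conicality of $A$ forces the relevant set of cocycles to collapse to a single point.

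First I would dispose of the trivial case: if $\textsf{FAC}(A/M)=\emptyset$, then $|\textsf{FAC}(A/M)|=0\leq 1$ and there is nothing to prove. So I would assume $\textsf{FAC}(A/M)\neq\emptyset$ and fix some $B\in\textsf{FAC}(A/M)$, so that $M$ is factorized by its submonoids $A$ and $B$. We are then precisely in the situation preceding Theorem \ref{com.mon}, which provides a bijection $q\longmapsto\textsf{Ker}(q)$ between $\mathcal{D}_l^{u,B}(M,A)$ and $\textsf{FAC}(A/M)$. Hence it suffices to prove that the pointed set $\mathcal{D}_l^{u,B}(M,A)$ is a singleton; concretely, that it consists of the single element $l^B_A$.

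The key step is the observation that conicality rigidifies every admissible cocycle. Since $A$ is conical we have $U(A)=\{\textsf{1}_A\}$, so the defining condition $q(B)\subseteq U(A)$ for membership in $\mathcal{D}_l^{u,B}(M,A)$ forces $q(b)=\textsf{1}_A$ for every $b\in B$. I would then use the factorization to pin down $q$ on all of $M$: for an arbitrary $m\in M$, writing $m=l^B_A(m)\,r^B_A(m)$ with $l^B_A(m)\in A$ and $r^B_A(m)\in B$, condition (L2) applied to the cocycle $q$ gives
\[
q(m)=q\bigl(l^B_A(m)\,r^B_A(m)\bigr)\stackrel{(\mathrm{L2})}{=}l^B_A(m)\,q\bigl(r^B_A(m)\bigr)=l^B_A(m)\,\textsf{1}_A=l^B_A(m).
\]
Thus $q=l^B_A$ for every $q\in\mathcal{D}_l^{u,B}(M,A)$, so $\mathcal{D}_l^{u,B}(M,A)=\{l^B_A\}$, and the bijection of Theorem \ref{com.mon} yields $|\textsf{FAC}(A/M)|=1$.

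I do not expect any serious obstacle here: the entire content is the reduction to $\mathcal{D}_l^{u,B}(M,A)$ via Theorem \ref{com.mon} together with the remark that $U(A)=\{\textsf{1}_A\}$ trivializes the condition $q(B)\subseteq U(A)$. The only point requiring a little care is the logical structure—one must first ensure $\textsf{FAC}(A/M)$ is nonempty before invoking Theorem \ref{com.mon}, since that theorem presupposes an existing factorization $(A,B)$—but the empty case is handled separately and costs nothing. Once one is on the cocycle side, property (L2) and the unique decomposition $m=l^B_A(m)r^B_A(m)$ finish the argument immediately.
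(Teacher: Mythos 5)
Your proof is correct and follows essentially the same route as the paper: dispose of the empty case, then use Theorem \ref{com.mon} to identify $\textsf{FAC}(A/M)$ with the relevant set of descent $1$-cocycles and observe that conicality collapses it to a point. In fact your version is slightly more careful than the paper's, which cites the bijection somewhat loosely as $\mathcal{D}_l(M,U(A))\simeq\textsf{FAC}(A/M)$ and omits the short (L2) computation you give showing that every $q\in\mathcal{D}_l^{u,B}(M,A)$ must equal $l^B_A$.
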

\begin{proof}If $\textsf{FAC}(A/M)=\varnothing $, then clearly
$|\textsf{FAC}(A/M)|=0$. Otherwise there exists a submonoid $B$ of $M$ such that $M$ is factorized through $A$
and $B$ and it follows from Theorem \ref{com.mon}  that $\mathcal{D}_l(M, U(A)) \simeq \textsf{FAC}(A/M)$.
However, $U(A)=\{\textsf{1}_A\}$, because $A$ is assumed to be conical. Thus, $\mathcal{D}_l(M, U(A))$ (and hence also
$\textsf{FAC}(A/M)$) is one-point set. Therefore, $|\textsf{FAC}(A/M)|= 1$.
\end{proof}

A corollary follows immediately.

\begin{corollary}\label{conical.1} Let $M$ be a monoid. If there exits a monoid factorization $(A,B)$
of $M$ with $A$ conical, then $\emph{\textsf{FAC}}(A/M)=\{B\}$.
\end{corollary}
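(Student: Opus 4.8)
The plan is to derive this directly from Theorem \ref{conical} together with the general facts already established about $\textsf{FAC}(A/M)$. The statement to prove is that if $(A,B)$ is a monoid factorization of $M$ with $A$ conical, then $\textsf{FAC}(A/M)=\{B\}$. First I would observe that the hypothesis immediately tells us $\textsf{FAC}(A/M)\neq\varnothing$, since the given factorization $(A,B)$ witnesses that $B\in\textsf{FAC}(A/M)$. Hence by Theorem \ref{conical}, whose conclusion is $|\textsf{FAC}(A/M)|\leq 1$, we must have $|\textsf{FAC}(A/M)|=1$.

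The remaining task is simply to identify the unique element of this one-point set. Since $B$ is already known to lie in $\textsf{FAC}(A/M)$, and the set has exactly one element, that element must be $B$ itself. Therefore $\textsf{FAC}(A/M)=\{B\}$, as claimed.

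This is a genuinely immediate corollary, so I do not anticipate any real obstacle. The only point requiring care is the bookkeeping: Theorem \ref{conical} bounds the cardinality from above by $1$, and one must separately note that the existence of the factorization $(A,B)$ rules out the empty case, forcing equality and pinning down $B$ as the sole member. I would phrase the proof in one or two sentences, emphasizing that the existence of $(A,B)$ guarantees nonemptiness while Theorem \ref{conical} guarantees the set is no larger than a singleton.

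Concretely, the write-up would read: since $(A,B)$ is a factorization of $M$, we have $B\in\textsf{FAC}(A/M)$, so $\textsf{FAC}(A/M)\neq\varnothing$; by Theorem \ref{conical}, $|\textsf{FAC}(A/M)|\leq 1$, whence $|\textsf{FAC}(A/M)|=1$; and because $B$ is an element of this singleton, $\textsf{FAC}(A/M)=\{B\}$.
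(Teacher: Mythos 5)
Your proof is correct and is exactly the intended argument: the paper states this as an immediate consequence of Theorem \ref{conical} without writing out details, and your deduction (nonemptiness from the given factorization, the cardinality bound from the theorem, and $B$ pinned down as the unique element) is precisely that immediate deduction.
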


Suppose that $\kappa: B \to A$ is a homomorphism of monoids such that $\kappa (B)\subseteq U(A)$. The following identity
\[\phi_\kappa(b)(a)=\kappa(b)a \kappa(b)^{-1}, \qquad (a \in A, \,b\in B)\] defines a homomorphism $\phi_\kappa:B
\to \textsf{End}(A)$ of monoids (and hence a left monoid action of $B$ on $A$).
In this case, one says that the \emph{action of $B$ on $A$ is defined by the homomorphism $\kappa$}.

\begin{theorem}\label{action.phi} In the situation described above, the assignment
\[\chi \longmapsto  \chi \ast \kappa,\]  where $\chi \ast \kappa$ is the \emph{convolution product} of
$\chi$ and $\kappa$ given by $(\chi \ast \kappa)(b)=\chi(b)\kappa(b)$, yields
an isomorphism of pointed sets \[\mathcal{Z}^1_{\phi_\kappa} (B,A) \simeq \emph{\textsf{Hom}}(B,A),\]
in which $\emph{\textsf{Hom}}(B,A)$ is a pointed set with distinguished
element given by the homomorphism $\kappa$.
\end{theorem}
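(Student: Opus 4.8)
The plan is to write down an explicit candidate inverse and then check that both the stated map and its candidate inverse are well defined, after which the bijection and base-point preservation are immediate. Throughout I will use that the action here is $b \star a = \kappa(b)\,a\,\kappa(b)^{-1}$, so that the $1$-cocycle condition for $\chi \in \mathcal{Z}^1_{\phi_\kappa}(B,A)$ reads
\[
\chi(b_1 b_2) = \chi(b_1)\,\kappa(b_1)\,\chi(b_2)\,\kappa(b_1)^{-1},
\qquad (b_1, b_2 \in B),
\]
together with $\chi(\textsf{1}_B) = \textsf{1}_A$.

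First I would show that $\chi \ast \kappa$ is a homomorphism whenever $\chi$ is a $1$-cocycle. Preservation of units is clear since $(\chi \ast \kappa)(\textsf{1}_B) = \chi(\textsf{1}_B)\kappa(\textsf{1}_B) = \textsf{1}_A$. For multiplicativity I would expand $(\chi \ast \kappa)(b_1 b_2) = \chi(b_1 b_2)\kappa(b_1)\kappa(b_2)$ and substitute the cocycle identity above; the two inner factors $\kappa(b_1)^{-1}\kappa(b_1)$ then cancel, leaving $\chi(b_1)\kappa(b_1)\chi(b_2)\kappa(b_2) = (\chi \ast \kappa)(b_1)\,(\chi \ast \kappa)(b_2)$. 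Thus $\chi \ast \kappa \in \textsf{Hom}(B,A)$, so the assignment is a well-defined map of sets. Since $(0_{B,A} \ast \kappa)(b) = \textsf{1}_A\,\kappa(b) = \kappa(b)$, the distinguished $1$-cocycle $0_{B,A}$ is sent to $\kappa$, so the map respects base points.

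Next I would produce the inverse as $\psi \mapsto \psi \ast \kappa^{-1}$, where $\kappa^{-1}(b) := \kappa(b)^{-1}$ (which lands in $A$ precisely because $\kappa(B) \subseteq U(A)$); explicitly it sends a homomorphism $\psi : B \to A$ to the map $\chi$ with $\chi(b) = \psi(b)\kappa(b)^{-1}$. The key step is to verify that this $\chi$ satisfies the twisted cocycle condition. This is again a direct computation: using $\psi(b_1 b_2) = \psi(b_1)\psi(b_2)$ and $\kappa(b_1 b_2)^{-1} = \kappa(b_2)^{-1}\kappa(b_1)^{-1}$, one checks that $\chi(b_1 b_2) = \psi(b_1)\psi(b_2)\kappa(b_2)^{-1}\kappa(b_1)^{-1}$ coincides with $\chi(b_1)\kappa(b_1)\chi(b_2)\kappa(b_1)^{-1}$ after the obvious cancellation. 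Together with $\chi(\textsf{1}_B) = \textsf{1}_A$, this shows $\chi \in \mathcal{Z}^1_{\phi_\kappa}(B,A)$.

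Finally, the two assignments are mutually inverse by a one-line cancellation in each direction, namely $(\chi \ast \kappa)(b)\,\kappa(b)^{-1} = \chi(b)$ and $(\psi \ast \kappa^{-1})(b)\,\kappa(b) = \psi(b)$; combined with the base-point computation above (and its mirror $(\kappa \ast \kappa^{-1})(b) = \textsf{1}_A$), this yields the claimed isomorphism of pointed sets. I do not anticipate a genuine obstacle: the only thing to get right is that the conjugation form of $\phi_\kappa$ is exactly what makes the twisted cocycle identity collapse to ordinary multiplicativity after multiplying by $\kappa$, so the argument is essentially two symmetric computations plus a short check on base points.
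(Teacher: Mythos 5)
Your proof is correct, but it takes a genuinely different route from the paper's. You argue by direct computation: the conjugation form of the action makes the twisted cocycle identity collapse to multiplicativity of $\chi \ast \kappa$, and the explicit inverse $\psi \mapsto \psi \ast \kappa^{-1}$ is checked by the symmetric cancellation, the only point where the hypothesis $\kappa(B)\subseteq U(A)$ enters being that $\kappa(b)^{-1}$ lies in $A$; all the identities you use ($\chi(b_1b_2)=\chi(b_1)\kappa(b_1)\chi(b_2)\kappa(b_1)^{-1}$, the two one-line cancellations, and $0_{B,A}\ast\kappa=\kappa$) are verified correctly. The paper instead argues structurally: it exhibits a monoid isomorphism $T : A\phi_\kappa B \to A\times B$, $ab \mapsto (a\kappa(b),b)$, commuting with the projections to $B$, invokes Proposition \ref{spl.} to identify $\mathcal{Z}^1_{\phi_\kappa}(B,A)$ with sections of $p_B : A\phi_\kappa B \to B$, transports these along $T$ to sections of $p_B : A\times B \to B$ (which are exactly homomorphisms $B\to A$), and then unwinds the composite $\chi \mapsto P_A T\overline{\chi}$ to recover the convolution formula. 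Your version is shorter and entirely self-contained, requiring nothing beyond the cocycle identity; the paper's version buys a conceptual explanation of where the formula $\chi\ast\kappa$ comes from (it is the $A$-component of the transported section) and records the useful fact that a semi-direct product for an action defined by a homomorphism into $U(A)$ is isomorphic over $B$ to the direct product. Both yield the same bijection of pointed sets.
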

\begin{proof} Direct inspection shows that the map
\[T:A \phi_\kappa B \to A \times B, \,\,ab \mapsto (a\kappa(b),b)\]
is an isomorphism of monoids, with inverse $(a,b) \to (a\kappa(b)^{-1})b$. Here $A \times B$ is the direct
product of the monoids $A$ and $B$. Quite obviously, $T$ makes the diagram
\[\xymatrix @R=.4in @C=.5in {
 A \phi_\kappa B \ar[rr]^T \ar[dr]_{P_B} && A \times B\ar[dl]^{P_B}\\
& B&}\]
commutive. According to Proposition \ref{spl.}, the assignment that takes $\chi \in \mathcal{Z}^1_{\phi_\kappa} (B,A)$ to
the map \[\ol{\chi}:B \to A \phi_\kappa  B,\,\, b \mapsto \chi(b)b\] yields a
bijection between $\mathcal{Z}^1(B,A)$ and the set of sections of the projection $p_B:A \phi_\kappa B \to B$.
Moreover, since $T$ is an isomorphism of monoids and since the diagram commutes, the rule $\sigma \to T\sigma$ establishes
a one-to-one correspondence between the sets of sections of the projections $p_B:A\phi_\kappa B \to B$
and $p_B:A\times B \to B$. Since $A \times B$ is the direct product of the monoids $A$ and $B$,
to give a section of the projection $p_B:A\times B \to B$ is to give a homomorphism $B \to A$
of monoids. Consequently, the assignment $\chi \longmapsto  P_A T \ol{\chi}\,\,$ yields
a bijection  \[\mathcal{Z}^1_{\phi_\kappa} (B,A) \simeq \textsf{Hom}(B,A)\] of sets. We write $\Phi$ for this bijection. Since for any
$\chi \in \mathcal{Z}^1_{\phi_\kappa} (B,A)$ and any $b \in B$, one has
\[
\begin{split}
\Phi(\chi)(b)=(P_A T \ol{\chi})(b)=&(P_A T )(\chi(b)b)\\
=&P_A((\chi(b) \kappa(b))b)=\chi(b) \kappa(b)=(\chi \ast \kappa)(b),
\end{split}
\]
we have \[\Phi(\chi)=\chi \ast \kappa \,\,\,\text{for all}\,\,\, \chi \in \mathcal{Z}^1_{\phi_\kappa} (B,A)\]
and the result follows by $\Phi(0_{B,A})=\kappa$.
\end{proof}

Define an equivalence relation on $\textsf{Hom}(B,A)$ by $f \approx g$ if and only if
there exists an element $a \in U(A)$ such that $g(b)=af(b)a^{-1}$ for all $b \in B$.
It is easy to see that $\approx$ is an equivalence relation. Write $\textsf{\textbf{Hom}}(B,A)$
for the quotient pointed set $\textsf{Hom}(B,A)/\approx$.

\begin{theorem}\label{action.phi.1} In the situation of Theorem \ref{action.phi}, the assignment
\[[\chi] \longmapsto  [\chi \ast \kappa]\]
yields
an isomorphism of pointed sets \[\emph{\textbf{H}}^1_{\phi^\kappa}(B,A) \simeq \textsf{\textbf{Hom}}(B,A).\]
\end{theorem}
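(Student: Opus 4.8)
The plan is to upgrade the set-level bijection of Theorem~\ref{action.phi} to a bijection of the two quotients, by checking that the convolution map $\Phi\colon \chi\mapsto\chi\ast\kappa$ carries the ``cohomologous'' relation $\sim$ on $\mathcal{Z}^1_{\phi_\kappa}(B,A)$ exactly onto the conjugacy relation $\approx$ on $\textsf{Hom}(B,A)$. Since Theorem~\ref{action.phi} already supplies $\Phi$ as a bijection of underlying pointed sets with $\Phi(0_{B,A})=\kappa$, once this compatibility is established the induced assignment $[\chi]\mapsto[\chi\ast\kappa]$ will automatically be a well-defined bijection between equivalence classes, and it will send the base point $[0_{B,A}]$ to the base point $[\kappa]$.

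The heart of the argument is a short computation resting on the fact that here the action is by conjugation, i.e. $b\star a_0=\phi_\kappa(b)(a_0)=\kappa(b)a_0\kappa(b)^{-1}$ for $a_0\in U(A)$. Suppose $\chi,\chi'\in\mathcal{Z}^1_{\phi_\kappa}(B,A)$ are cohomologous via some $a_0\in U(A)$, so that $\chi(b)(b\star a_0)=a_0\chi'(b)$ for all $b\in B$. Substituting the conjugation formula gives $\chi(b)\kappa(b)a_0\kappa(b)^{-1}=a_0\chi'(b)$, and multiplying on the right by $\kappa(b)$ yields
\[
(\chi\ast\kappa)(b)\,a_0=a_0\,(\chi'\ast\kappa)(b),
\qquad (b\in B),
\]
that is, $(\chi'\ast\kappa)(b)=a_0^{-1}(\chi\ast\kappa)(b)a_0$. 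Thus $\chi\ast\kappa\approx\chi'\ast\kappa$, with conjugating element $a_0^{-1}\in U(A)$. Because every step in this manipulation is reversible---multiplication by the invertible $\kappa(b)$ can be undone and the conjugation formula is read in either direction---the converse holds as well: if $\chi\ast\kappa$ and $\chi'\ast\kappa$ are conjugate by some $a\in U(A)$, then $\chi$ and $\chi'$ are cohomologous via $a^{-1}$.

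With this equivalence in hand I would conclude as follows. The bijection $\Phi$ identifies the relation $\sim$ with the relation $\approx$, so it descends to a bijection of the quotient sets $\textbf{H}^1_{\phi_\kappa}(B,A)=\mathcal{Z}^1_{\phi_\kappa}(B,A)/{\sim}$ and $\textsf{\textbf{Hom}}(B,A)=\textsf{Hom}(B,A)/{\approx}$, given precisely by $[\chi]\mapsto[\chi\ast\kappa]$; compatibility with base points was noted above, so this map is an isomorphism of pointed sets. There is no genuine obstacle here: the only point requiring care is the bookkeeping of inverses (conjugation by $a_0$ versus $a_0^{-1}$) and the verification that the passage between the two relations is genuinely two-sided, which is guaranteed by the invertibility of $\kappa(b)$ for every $b\in B$.
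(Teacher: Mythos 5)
Your proposal is correct and follows essentially the same route as the paper: substitute the conjugation action $b\star a_0=\kappa(b)a_0\kappa(b)^{-1}$ into the cohomologous relation and cancel $\kappa(b)^{-1}\kappa(b)$ to see that $\chi\ast\kappa$ and $\chi'\ast\kappa$ are conjugate by $a_0^{-1}$. You are in fact slightly more complete than the paper, which only records the forward implication (well-definedness) and leaves the reverse implication (injectivity of the induced map on classes) to the evident reversibility of the computation that you spell out.
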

\begin{proof} According to Theorem \ref{action.phi}, it suffices to show that if $[\chi]=[\chi']$ in $\textbf{H}^1_{\phi^\kappa}(B,A)$,
then $[\chi \ast \kappa]=[\chi' \ast \kappa]$ in $\textsf{\textbf{Hom}}(B,A)$. Suppose that
$[\chi]=[\chi']$ in $\textbf{H}^1_{\phi^\kappa}(B,A)$. Then (see Section \ref{comp.}) there exists an element $a \in U(A)$ such that
$ a\chi'(b)=\chi(b) (b\star a)$ for all $b\in B$. Since $b\star a=\kappa(b) a \kappa(b)^{-1}$, it follows that
\begin{equation}\label{action.eq.}
  \chi'(b)=a^{-1}\chi(b)\kappa(b) a \kappa(b)^{-1} \,\,\text{for all}\,\, b \in B.
\end{equation} We then have for all $b \in B$:
\begin{align*}
(\chi' \ast \kappa)(b) &=&\text{by definition of} \,\,\ast\\
&=\chi'(b) \kappa(b)&\text{by} (\ref{action.eq.})\\
&=a^{-1}\chi(b)\kappa(b) a \kappa(b)^{-1}\kappa(b)\\
&=a^{-1}\chi(b)\kappa(b) a &\text{by definition of} \,\,\ast\\
&= a^{-1}(\chi \ast \kappa)(b) \,a.
\end{align*} This proves that $\chi \ast \kappa$ and $\chi' \ast \kappa$ are equivalent in $\textsf{Hom}(B,A)$
and hence $[\chi \ast \kappa]=[\chi' \ast \kappa]$ in $\textsf{\textbf{Hom}}(B,A)$.

\end{proof}

\bibliographystyle{amsplain}

\end{document}